\newtheorem{theorem}{Theorem}
\newtheorem{prop}{Proposition}
\newtheorem{lemma}{Lemma}
\newtheorem{claim}{Claim}
\newtheorem{remark}{Remark}
\numberwithin{equation}{section}
\author{Gang Liu}
\address{Department of Mathematics\\University of Minnesota\\Minneapolis, MN 55455}
\email{liuxx895@math.umn.edu}
\title[Local volume comparison]{Local volume comparison for K\"{a}hler manifolds}
\date{}
\begin{document}
\begin{abstract}On K\"ahler manifolds with Ricci curvature lower bound, assuming the real analyticity of the metric,
we establish a sharp relative volume comparison theorem for small balls.
The model spaces being compared to are complex space forms, i.e, K\"ahler manifolds with constant holomorphic sectional curvature.
Moreover, we give an example showing that on K\"{a}hler manifolds, the pointwise Laplacian comparison theorem does not hold when the Ricci curvature is bounded from below.
\end{abstract}
\maketitle

\section{\bf{Introduction}}
Comparison theorems are fundamental tools in geometric analysis. They are vital in the estimates of the spectrums, heat kernels and the Sobolev constants.
The classical Bishop-Gromov's relative volume comparison theorem [1][3][4] in Riemannian geometry is the following:
\begin{theorem}
 Let $M^n$ be a complete Riemannian manifold such that $Ric \geq (n-1)K$. For any $p \in M$ and $0 < a < b$, the volume of
geodesic balls satisfy
$$\frac{Vol(B_{p}(b))}{Vol(B_{p}(a))}\leq \frac{Vol(B_{M_K}(b))}{Vol(B_{M_K}(a))},$$ where $M_K$ is the simply
connected real space form with sectional curvature $K$, $Vol(B_{M_K}(r))$ is the volume of the
geodesic ball in $M_K$ with radius $r$. The equality holds iff $B_p(b)$ is isometric to $B_{M_K}(b)$.
\end{theorem}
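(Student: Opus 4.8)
The plan is to reduce everything to a one-dimensional comparison along radial geodesics via geodesic polar coordinates centered at $p$. First I would write the volume element as $dV = A(r,\theta)\,dr\,d\theta$ for $\theta \in S^{n-1}$ and $0 < r < c(\theta)$, where $c(\theta)$ is the distance to the cut locus and $A(r,\theta)$ is the Jacobian of the exponential map; past the cut locus I extend $A$ by zero so that all integrals range over $S^{n-1}\times(0,\infty)$. The logarithmic derivative $m(r,\theta)=\partial_r\log A$ is the mean curvature of the geodesic sphere (equivalently $\Delta r$), and the radial Riccati equation for the shape operator $S$, namely $\partial_r S + S^2 + R_{\partial_r}=0$, yields after taking traces and applying Cauchy-Schwarz $\mathrm{tr}(S^2)\geq m^2/(n-1)$ the differential inequality
$$\partial_r m + \frac{m^2}{n-1} \leq -Ric(\partial_r,\partial_r) \leq -(n-1)K.$$

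In the model $M_K$ the analogous density is $\bar A(r)=\mathrm{sn}_K(r)^{n-1}$, where $\mathrm{sn}_K$ solves $\mathrm{sn}_K'' + K\,\mathrm{sn}_K=0$ with $\mathrm{sn}_K(0)=0,\ \mathrm{sn}_K'(0)=1$, and $\bar m=(n-1)\,\mathrm{sn}_K'/\mathrm{sn}_K$ satisfies the same relation with equality. Comparing the two Riccati (in)equalities under the common asymptotics $m\sim (n-1)/r\sim\bar m$ as $r\to 0$, a Sturm-type argument gives $m(r,\theta)\leq\bar m(r)$ on the segment domain, which is precisely the Laplacian comparison. Integrating, I obtain that $r\mapsto A(r,\theta)/\bar A(r)$ is non-increasing for each fixed $\theta$, with a jump to $0$ at the cut point.

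From the pointwise monotonicity of the density ratio I would pass to the volume ratio by a monotone-quotient lemma: if $f,g\geq 0$ and $f/g$ is non-increasing, then $r\mapsto \int_0^r f/\int_0^r g$ is non-increasing. Applying this after integrating over $S^{n-1}$ (with $A\equiv 0$ beyond the cut locus) yields the monotonicity of $Vol(B_p(r))/Vol(B_{M_K}(r))$, and evaluating at $r=a$ and $r=b$ gives the claimed inequality.

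For the rigidity statement, equality at $a<b$ forces $A(r,\theta)/\bar A(r)$ to be constant in $r$ on $(0,b)$ for a.e.\ $\theta$, hence $m\equiv\bar m$, which saturates both the Cauchy-Schwarz and the Ricci step: $S=\frac{\bar m}{n-1}\,\mathrm{Id}$ (totally umbilic spheres) and $Ric(\partial_r,\partial_r)\equiv(n-1)K$ with no cut locus inside $B_p(b)$. This identifies the metric in polar coordinates with that of $M_K$, giving the isometry $B_p(b)\cong B_{M_K}(b)$. I expect the main obstacle to be the careful treatment of the cut locus—justifying that extending $A$ by zero only reinforces the monotonicity and controlling the distributional behavior of $\Delta r$ there—together with the full rigidity analysis, which requires upgrading the pointwise saturation of the curvature inequalities to an actual isometry.
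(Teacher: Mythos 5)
The paper does not actually prove this statement: Theorem 1 is the classical Bishop--Gromov relative volume comparison, quoted from the references [1][3][4], with the paper only remarking that its key ingredient is the Laplacian comparison theorem (Theorem 2). Your proposal is exactly that standard route --- polar-coordinate volume density extended by zero past the cut locus, the Riccati/Sturm comparison giving $m(r,\theta)\leq \bar m(r)$, the monotone-quotient lemma yielding monotonicity of $Vol(B_p(r))/Vol(B_{M_K}(r))$, and the usual rigidity analysis via umbilicity of the spheres --- and it is correct as outlined, so it coincides with the approach the paper implicitly endorses.
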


The key ingredient in theorem 1 is the Laplacian comparison theorem [2][6]:
\begin{theorem}
{Let $M^n$ be a complete Riemannian manifold with $Ric \geq (n-1)K$. Let $M_k$ be the simply connected
real space form with sectional
 curvature $K$.   Denote $r_M(x)$ to be distance function from $p$ to $x$ in $M$. Let $r_{M_k}$ be the distance
function on $M_k$. Then for any $x \in M$,
 $y \in M_k$ with $r_M(x) = r_{M_k}(y)$,  $$\Delta r_M(x) \leq \Delta r_{M_k}(y).$$}
\end{theorem}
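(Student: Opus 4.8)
The plan is to establish the inequality first at points $x$ lying outside the cut locus of $p$, where $r_M$ is smooth, and then to recover the full statement at cut points in the barrier sense. So fix such an $x$, set $r=r_M(x)$, and let $\gamma:[0,r]\to M$ be the unit-speed minimizing geodesic with $\gamma(0)=p$ and $\gamma(r)=x$. Since $|\nabla r_M|\equiv 1$, the Hessian $\nabla^2 r_M$ has $\nabla r_M=\gamma'$ in its kernel, so along $\gamma$ it acts as a symmetric form on the $(n-1)$-dimensional orthogonal complement of $\gamma'$. Writing $H(r):=\Delta r_M(\gamma(r))$, I would apply the Bochner formula to $r_M$: since $\tfrac12\Delta|\nabla r_M|^2=0$, it reads $|\nabla^2 r_M|^2+H'+Ric(\gamma',\gamma')=0$, where I used $\langle\nabla r_M,\nabla\Delta r_M\rangle=H'$.

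The first substantive step is to feed in the curvature. By Cauchy--Schwarz applied to the $(n-1)$-dimensional Hessian, $|\nabla^2 r_M|^2\ge H^2/(n-1)$, and $Ric\ge(n-1)K$, so the Bochner identity yields the Riccati inequality $H'+\frac{H^2}{n-1}+(n-1)K\le 0$. I would then linearize it: writing $H=(n-1)\,v'/v$ with $v=(\det g)^{1/(2(n-1))}$ the normalized volume density along $\gamma$ in geodesic polar coordinates (so that $\Delta r=\partial_r\log\sqrt{\det g}$), one checks $H'+\frac{H^2}{n-1}=(n-1)v''/v$, whence $v''+Kv\le 0$ on $(0,r]$. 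The expansion of $g$ in normal coordinates gives the initial data $v(0)=0$ and $v'(0)=1$.

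The comparison is then a Sturm argument. In $M_k$ the same computation holds with equality, $v$ being replaced by $s_K$, the solution of $s_K''+Ks_K=0$ with $s_K(0)=0$, $s_K'(0)=1$, and $\Delta r_{M_k}=(n-1)s_K'/s_K$ depends only on the radius. Form the Wronskian $W=v's_K-vs_K'$; then $W(0)=0$ and $W'=(v''+Kv)\,s_K\le 0$ as long as $s_K>0$, so $W\le 0$, giving $v'/v\le s_K'/s_K$ and hence $\Delta r_M(x)=H(r)\le (n-1)s_K'(r)/s_K(r)=\Delta r_{M_k}(y)$ whenever $r_M(x)=r_{M_k}(y)$.

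Finally, I expect the main obstacle to be not the ODE comparison but the behavior at the cut locus, where $r_M$ is only Lipschitz and $\Delta r_M$ is not classically defined. There I would invoke Calabi's trick: at a cut point $x$ one compares $r_M$ with a smooth upper barrier obtained by moving the base point slightly along a minimizing geodesic, so that $\Delta r_M\le\Delta r_{M_k}$ continues to hold in the barrier (support) sense. The other delicate point, worth stating carefully, is the singular initial condition at $r=0$: both $H$ and its model counterpart blow up like $(n-1)/r$, and it is precisely the matching of the regularized data $v(0)=0$, $v'(0)=1$ that makes the Wronskian comparison legitimate up to the first zero of $s_K$, which contains the admissible range of $r$.
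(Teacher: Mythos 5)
Your argument is correct: the Bochner identity plus Cauchy--Schwarz yields the Riccati inequality, the substitution $v=(\det g)^{1/(2(n-1))}$ linearizes it with the right initial data $v(0)=0$, $v'(0)=1$, the Wronskian comparison with $s_K$ closes the smooth case, and Calabi's barrier trick correctly handles the cut locus. The paper does not prove Theorem 2 at all---it cites it as classical from [2][6]---and your proof is essentially the standard one found in those references, so there is nothing further to compare.
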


The model spaces in above theorems are real space forms.
In the K\"ahler category, it is a natural question whether we can replace the model spaces
by K\"ahler models, i.e, complex space forms which are K\"ahler manifolds with constant holomorphic sectional curvature.
In [5], Li and Wang showed that when the bisectional curvature has a lower bound, both theorems above hold with K\"ahler models. So the question left is:
what can we get if we only assume the lower bound of the Ricci curvature? This note addresses with the local case. The main theorem is the following:

\bigskip

\begin{theorem}{Let $M^n(n= dim_{\mathbb{C}}M)$ be a K\"{a}hler manifold with real
analytic metric. Assume $Ric \geq K$ ($K$ is any real number). Given any point $p\in M$, there exists $r=r(p, M)>0$ such that
for any $0< a < b < r$,
the volume of geodesic balls satisfy
 $$\frac{Vol(B_{M^n}(p, b))}{Vol(B_{M^n}(p, a))}\leq \frac{Vol(B_{N_K}(b))}{Vol(B_{N_K}(a))},$$
where $N_K$ denotes the rescaled
complex space form with $Ric = K$, $\Delta_{N_K}r$ is the Laplacian of distance function on $N_K$.
The equality holds iff $M$ is locally isometric to $N_K$.}
\end{theorem}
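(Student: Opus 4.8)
The plan is to recast the volume ratio inequality as a monotonicity statement and to prove it through an \emph{averaged} Laplacian comparison, using real analyticity to reduce the averaged comparison to an inequality between power series. Writing $A(r)$ and $V(r)$ for the area of the geodesic sphere $S_p(r)$ and the volume of $B_{M^n}(p,r)$, and $A_{N_K}(r),V_{N_K}(r)$ for the model quantities, the asserted inequality $V(b)/V(a)\le V_{N_K}(b)/V_{N_K}(a)$ is equivalent to $V(r)/V_{N_K}(r)$ being non-increasing; by the standard fact that $f/g$ non-increasing forces $\int_0^r f\big/\int_0^r g$ non-increasing, it suffices to show that $A(r)/A_{N_K}(r)$ is non-increasing for small $r$. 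Since $(\log A)'(r)$ equals the area-weighted average of $\Delta r$ over $S_p(r)$ (in geodesic polar coordinates $dA=\mathcal J(r,\theta)\,d\theta$ and $\Delta r=\partial_r\log\mathcal J$), while $(\log A_{N_K})'(r)=\Delta_{N_K}r$, this is exactly the averaged comparison
$$\frac{1}{A(r)}\int_{S_p(r)}\Delta r\,dA\;\le\;\Delta_{N_K}(r).$$
I stress that the \emph{pointwise} inequality $\Delta r\le\Delta_{N_K}r$ is false here (this is the content of the paper's example), so the averaging over the sphere is essential: the mechanism must be that the directions in which $\Delta r$ exceeds the model are compensated, over $S_p(r)$, by the Ricci constraint.

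Because the metric is real analytic, the exponential map is a real-analytic diffeomorphism on a small ball, $\mathcal J(r,\theta)$ is jointly real analytic, and hence $\psi(r):=(\log A)'(r)-(\log A_{N_K})'(r)$ is a real-analytic function on $(0,\varepsilon)$ whose expansion has no singular part (both terms carry the same $\tfrac{2n-1}{r}$ singularity). The plan is therefore to show that the first nonvanishing Taylor coefficient of $\psi$ is negative; by analyticity this yields $\psi<0$, and thus the desired monotonicity, on a possibly smaller interval. The leading coefficient is computed from the classical expansion $A(r)=\omega_{2n-1}r^{2n-1}\bigl(1-\tfrac{S(p)}{12n}r^2+O(r^4)\bigr)$, with $\omega_{2n-1}=\mathrm{vol}(S^{2n-1})$ and $S$ the scalar curvature, together with $\Delta_{N_K}r=\tfrac{2n-1}{r}-\tfrac{K}{3}r+O(r^3)$, giving
$$\psi(r)=\frac{1}{6n}\bigl(2nK-S(p)\bigr)\,r+O(r^3).$$
Since $Ric\ge K$ forces $S(p)=\operatorname{tr}Ric(p)\ge 2nK$, this coefficient is $\le 0$, and it is strictly negative unless $Ric(p)=Kg$. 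Thus in the generic case the comparison holds for small $r$ with no further work.

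The main obstacle is the borderline case $Ric(p)=Kg$, where the $r^1$ term drops out and one must control the $r^3$ coefficient of $\psi$, equivalently the $r^4$ coefficient of $A(r)$. This coefficient is a quadratic expression in the curvature tensor at $p$ (involving $|Rm|^2$, $|Ric|^2$, $S^2$ and $\Delta S$), and here both hypotheses become essential. First, since $S-2nK\ge 0$ attains its minimum at $p$, one has $\nabla S(p)=0$ and $\Delta S(p)\ge 0$, which pins the sign of the $\Delta S$ contribution. Second, and this is the crux, the remaining purely algebraic part must be dominated by its value for the complex space form; this amounts to an \emph{algebraic inequality for Kähler curvature operators}: among all Kähler curvature tensors with $Ric=Kg$, the sphere-average of the relevant curvature-quadratic is extremized by the constant-holomorphic-sectional-curvature tensor. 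I would prove this by decomposing the Kähler curvature tensor into its holomorphic-sectional and bisectional parts relative to the splitting $T_pM=\mathbb{C}\gamma'\oplus W$, and using the trace relation $K=Ric(\gamma',\gamma')=H(\gamma')+\sum_{e}\mathrm{Sec}(\gamma',e)$ (sum over an orthonormal basis of $W$) together with Cauchy--Schwarz; the point is that the complex space form saturates every such trace relation simultaneously. This is precisely the step where the Kähler hypothesis, rather than a mere real sectional-curvature bound, is used, and where the sharper complex model emerges in place of the real space form. I expect this algebraic comparison, carried out uniformly in the order of vanishing, to be the hardest part.

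Finally, real analyticity disposes of the remaining case and the equality statement together. If no Taylor coefficient of $\psi$ is strictly negative then all of them vanish, so $\psi\equiv0$, $A(r)\equiv A_{N_K}(r)$, and tracing back through the now-forced equalities in the averaged comparison and in the Cauchy--Schwarz steps above shows that the curvature tensor agrees with that of $N_K$ along every geodesic issuing from $p$; analytic continuation then produces a local isometry onto $N_K$. The same argument shows that a single equality $V(b)/V(a)=V_{N_K}(b)/V_{N_K}(a)$ with $0<a<b<r$ forces $\psi\equiv0$ and hence local isometry to $N_K$, which is the asserted rigidity.
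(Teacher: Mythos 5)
Your overall strategy coincides with the paper's: you reduce the volume ratio to monotonicity of $A(r)/A_{N_K}(r)$, hence to the averaged Laplacian comparison (the paper's Theorem 4), expand everything in a power series in $r$ using real analyticity, and argue that the first Taylor coefficient of $\psi$ differing from the model is negative, with the two-case rigidity argument at the end exactly as in the paper's Section 5. Your treatment of the first two nontrivial orders also mirrors the paper's Section 3: the $r$-coefficient is controlled by $S(p)\ge 2nK$, and in the borderline case $Ric(p)=Kg$ the next coefficient is handled by the sign of second derivatives of Ricci at its minimum, Cauchy--Schwarz, and the K\"ahler fact that the sphere-average of the holomorphic sectional curvature $R_{11}=R(e_0,Je_0,e_0,Je_0)$ is a universal multiple of the scalar curvature. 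Up to this point the proposal is sound and is essentially the paper's argument.

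The genuine gap is the induction step you defer with ``carried out uniformly in the order of vanishing'': that is where the entire content of the theorem sits, and your description of it is wrong in a way that matters. Once the coefficients up to order $2m-1$ are forced to match the model, the curvature tensor \emph{at} $p$ already equals that of the space form, so the $r^{2m}$-coefficient is not ``a quadratic expression in the curvature tensor'' amenable to your trace-relation/Cauchy--Schwarz scheme; it is a quadratic form in the $(m-2)$-nd covariant derivatives $R^{(m-2)}_{uv}$ \emph{plus linear terms} in derivatives of order up to $2m-2$, of the shape $\sum_i h_{m,i}\int R^{(m+i)}_{11}$ and $C_m\int Ric^{(2m-2)}$, and the linear high-derivative terms have no a priori sign. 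The paper's proof requires three ingredients your sketch does not contain: (i) negative-definiteness of the quadratic form in $R^{(m-2)}_{uv}$ (its Lemma 2, extracted from the Jacobi-coefficient recursion); (ii) the vanishing of the linear terms, which rests on two nontrivial K\"ahler lemmas --- that all covariant derivatives $Rm^{(\lambda)}$ vanish for $1\le\lambda\le m-3$ under the induction hypothesis (Lemma 3, proved by commuting derivatives via the Ricci identity and reading off polynomial coefficients in $z_j,\overline{z_j}$), and that $\int R^{(2m-4)}_{11}$ is a universal combination $\sum_i g_{i,m}\Delta^i s$ (Lemma 4, via unitary invariance and the binomial relation $C_p=C_d\binom{d}{p}$), combined with a recursion from $\int Ric^{(l)}=\sum_k C_{l,k}\Delta^k s=0$ forcing $\Delta^k s(p)=0$; and (iii) the vanishing of the odd-order coefficients (the paper's Proposition 2) --- without which your claim that ``the first nonvanishing Taylor coefficient of $\psi$ is negative'' could fail to an odd coefficient of undetermined sign. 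Since the pointwise comparison genuinely fails (the paper's Section 6 example), these derivative-level cancellations are precisely where the theorem lives, and the proposal as written has no mechanism for them at any order beyond $r^4$.
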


\begin{remark}{ Theorem 3 is a local version of
Bishop-Gromov's relative volume comparison theorem on K\"ahler manifolds. However, one cannot directly
extend theorem 3 to any radius. A simple example is the product
 of $\mathbb{P}^1$ with the standard product metric.
Then the diameter is greater than that of the complex space form. This implies
when $r$ is large, the inequality in theorem 3 does not hold.}
\end{remark}
We can prove a result which is slightly stronger than theorem 3:
\begin{theorem}{Under the same assumption as in theorem 3,
there exists $r_0=r_0(p, M)>0$ such that for any $r < r_0$, the average Laplacian comparison holds:
 $$\frac{\int_{\partial B_p(r)}\Delta r}{A(\partial B_p(r))}\leq \Delta_{N_K}r(r),$$ where $\Delta_{N_K}r$ is the Laplacian of distance function on $N_K$.
Moreover, the equality holds iff $M$ is locally isometric to $N_K$.}
\end{theorem}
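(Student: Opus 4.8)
The plan is to first rewrite both sides of the asserted inequality as logarithmic derivatives of sphere areas. Writing $A(r) = A(\partial B_p(r))$ for the area of the geodesic sphere and $\mathcal A(r,\theta)$ for the area density in geodesic polar coordinates, one has $\Delta r = \partial_r \log \mathcal A$, so that
$$\int_{\partial B_p(r)} \Delta r \, dA = \int_{S^{2n-1}} \partial_r \mathcal A(r,\theta)\, d\theta = A'(r),$$
and hence the left-hand side of Theorem 4 is exactly $(\log A)'(r)$. The same computation on the homogeneous model gives $\Delta_{N_K} r(r) = (\log A_{N_K})'(r)$. Thus the statement is equivalent to the monotonicity inequality $(\log (A/A_{N_K}))'(r) \le 0$ for $0<r<r_0$, and, after integrating in $r$, it is indeed stronger than the relative area (hence volume) comparison of Theorem 3. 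Because the metric is real analytic and the model is analytic, the function $f(r) := \log A(r) - \log A_{N_K}(r)$ is real analytic near $0$ (the $\log r$ singularities cancel) with $f(0)=0$; I would therefore prove $f'\le 0$ by examining the Taylor coefficients of $f$, showing that the first nonvanishing one is negative, the remaining case ($f\equiv 0$) being the equality case.

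The coefficients of $f$ are averaged curvature invariants of $M$ at $p$ minus those of $N_K$, computed from the classical Gray expansion of $\mathcal A(r,\theta)$. At order $r^2$ only the scalar curvature enters: since $\int_{S^{2n-1}}Ric(\theta,\theta)\,d\theta$ is proportional to $R_M(p)$, the coefficient $c_2$ of $f$ is a negative multiple of $R_M(p) - R_{N_K}$. The hypothesis $Ric \ge K$ gives $R_M(p) = \operatorname{tr} Ric(p) \ge 2nK = R_{N_K}$, so $c_2 \le 0$; if $c_2<0$ then $f'<0$ for small $r$ and we are done. The odd-order coefficient $c_3$ vanishes automatically (the integrand is odd in $\theta$). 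The interesting situation is $c_2 = 0$: then $R_M(p) = 2nK$ is the pointwise minimum of $R_M$, which forces $Ric(p) = Kg$ and, as $p$ is a global minimum of $R_M$, also $\nabla R_M(p)=0$ and $\Delta R_M(p) \ge 0$.

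The heart of the argument is the order-$r^4$ comparison under $Ric(p)=Kg$. Here the relevant averaged invariant is a fixed positive multiple of a combination of the form $5R^2 + 8|Ric|^2 - 3|Rm|^2 - 18\Delta R$ evaluated at $p$; since $R$ and $|Ric|^2$ are pinned by $Ric(p)=Kg$, the difference $c_4$ reduces to a positive multiple of $-3(|Rm_M(p)|^2 - |Rm_{N_K}|^2) - 18\,\Delta R_M(p)$. The term $\Delta R_M(p)\ge 0$ already has the right sign, and the genuinely Kähler input is the inequality $|Rm_M(p)|^2 \ge |Rm_{N_K}|^2$: using the orthogonal decomposition of a Kähler curvature tensor into its constant-holomorphic-sectional-curvature part, its traceless-Ricci part, and its Bochner part, one has $|Rm|^2 = |\text{scalar part}|^2 + |\text{Ricci part}|^2 + |B|^2$, and when $Ric(p)=Kg$ the Ricci part vanishes, so $|Rm_M(p)|^2 \ge |Rm_{N_K}|^2$ with equality iff the Bochner tensor $B(p)=0$, i.e. iff $M$ has constant holomorphic sectional curvature at $p$. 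Hence $c_4 \le 0$, and if $c_4<0$ we again conclude.

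The remaining obstacle — and the step I expect to be hardest — is the borderline $c_2=c_4=0$ and the passage to higher orders. Vanishing of $c_4$ forces $B(p)=0$ and $\Delta R_M(p)=0$ (so $\operatorname{Hess} R_M(p)=0$), pinching the full jet of the metric at $p$ ever more tightly toward that of $N_K$. I would argue that this pinching cascades: the vanishing of $c_2,\dots,c_{2k}$ forces the $2k$-jet of the curvature at $p$ to agree with $N_K$ and makes the bad terms in $c_{2k+2}$ vanish, keeping each first-nonzero coefficient negative, the order-$4$ computation above being the prototype of this Kähler-specific sign control. If instead every coefficient vanishes, then $f\equiv 0$, i.e. $A(r)\equiv A_{N_K}(r)$; real analyticity is then used decisively to upgrade this matching of all local invariants at $p$ to a local isometry $M \cong N_K$, which is precisely the equality case. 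The main difficulty is thus twofold: establishing the Kähler curvature inequality at each order (controlled at order $4$ by the Bochner decomposition, but requiring care at higher orders where derivatives of curvature and $\Delta R\ge 0$ must be combined), and carrying out the analytic rigidity argument in the equality case.
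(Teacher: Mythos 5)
Your reduction of the statement to the sign of the first nonvanishing Taylor coefficient of $f(r)=\log A(r)-\log A_{N_K}(r)$ is exactly the paper's framework (its function $W$ is $A(r)/r^{2n-1}$), and your analysis through order $r^4$ is correct. At order $4$ your route via the orthogonal decomposition of the K\"ahler curvature tensor (so that $Ric(p)=Kg$ kills the Ricci part and $|Rm_M(p)|^2\geq|Rm_{N_K}|^2$ with equality iff the Bochner part vanishes) is in fact cleaner than the paper's, which instead applies Schwartz inequalities together with the K\"ahler averaging identity $\int R_{11}=C_3 s$ over the unit sphere; the two arguments encode the same K\"ahler input and yield the same equality condition (constant holomorphic sectional curvature at $p$ and $Ric''(e_0,e_0)=0$). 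Your parity remark disposing of odd coefficients and your use of real analyticity in the all-coefficients-vanish case also match the paper (its Proposition 2 and Case 1 of Section 5).

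The genuine gap is the higher-order step, which you describe as a cascade ``keeping each first-nonzero coefficient negative'' with the order-$4$ computation as prototype --- but that is precisely the claim requiring proof, and it occupies essentially the whole paper (Proposition 1 and Section 4). At order $2m$ the coefficient $c_{2m}$ is \emph{not} a signed sum of norm-type terms plus $\Delta s$: besides a negative-definite quadratic form in $R^{(m-2)}_{uv}$ and a negative multiple of $\int Ric^{(2m-2)}\geq 0$, it contains linear terms $\int R^{(m+i)}_{11}$ in high covariant derivatives of the curvature, which carry no sign at all and are not controlled by any pointwise pinching or Bochner-type inequality. The paper eliminates them by two K\"ahler-specific lemmas that have no counterpart in your sketch: Lemma 3 shows by polarization in complex coordinates (writing $e_0=\sum_j z_j\beta_j+\overline{z_j}\overline{\beta_j}$ and using that $R^{(\lambda)}_{Je_0Je_0}$ vanishes identically as a polynomial in $z,\overline{z}$) that all covariant derivatives $Rm^{(\lambda)}$ vanish at $p$ for $1\leq\lambda\leq m-3$; Lemma 4 then exploits unitary invariance of the averaged linear terms (a rotation in a $\beta_1\beta_2$-plane forces the coefficients to be binomial, $C_p=C_d\binom{d}{p}$) to identify each $\int R^{(m+i)}_{11}$ with a universal multiple of $\Delta^{i'}s$, after which a recursion from $Ric^{(l)}=0$ kills every $\Delta^k s$ at $p$. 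Without this mechanism --- or some substitute showing the unsigned linear contributions vanish --- your induction does not close: pinching the jet of the metric at $p$ governs only the quadratic part of $c_{2m}$, and the order-$4$ case, where no derivative-of-curvature linear terms yet occur, is not an honest prototype for $m\geq 3$.
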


\begin{remark}{Theorem 4 is a local version of theorem 2 in the average sense. However, on K\"ahler manifolds
with Ricci curvature lower bound, the pointwise Laplacian comparison does not hold even locally(see section 6).}
\end{remark}

The idea of the proof of theorem 4 is very simple. We shall expand the area of the geodesic
sphere $A(\partial B_p(r))$ by power series, then compare the coefficients with that of the
rescaled complex space form. The computation is complicated since it involves the covariant derivatives of the curvature tensor
with arbitrary order.

This note is organized as follows:

In section 2, we state two propositions which demonstrate the relation between the
 derivatives of $A(\partial B_p(r))$ and covariant derivatives of the curvature tensor at $p$. Section 3 is the first part
of the proof of proposition 1. We shall estimate the derivatives of $A(\partial B_p(r))$ up to order $4$.
In the estimate
of the $4$th derivative, the K\"ahler condition is employed. The most important part is section 4. We use an
induction to prove proposition 1. Besides the routine computation, there are two technical lemmas(lemma 3 and
lemma 4) which simplify the computation of higher order covariant derivatives of the curvature tensor significantly. One should
note that the K\"ahler condition is essential in these two lemmas. We complete the proof of proposition 2 and theorem 4 in section 5. The last section is devoted to giving an example showing that the pointwise Laplacian comparing
with the complex space form does not necessarily hold if the complex dimension is greater or equal to 2.

\bigskip
\vskip.1in
\begin{center}
\bf  {\quad Acknowledgment}
\end{center}

The author is grateful to his advisor Professor Jiaping Wang for continuous encouragement and helpful discussions during the work.

\section{\bf{Basic set up}}

\bigskip
Throughout this note, for derivatives of functions of $r$, we are always evaluating at $r=0$.
Given a point $p$ on a K\"{a}hler manifold $M^n$,  fix a unit vector $e_0 \in T_{p}M$. Along the geodesic $l$ from $p$ with initial direction $e_0$, consider the Jacobian equation $J''=R(e_0,J)e_0$. Set up an orthonormal frame $\{e_k\}$ at $p$ such that $Je_{2i}=e_{2i+1}, Je_{2i+1}=-e_{2i}$ for $0\leq i\leq n-1$. Parallel transport the frame along the geodesic $l$. Consider the Jacobian field $J_u$ with initial value $J_u(0)=0, J'_u(0)=e_u$.

We may write
\begin{equation}\label{1.1}
J_u=J_u(r, e_0)=\sum\limits_{i=1}^{\infty}\sum\limits_{v=0}^{2n-1} r^{i}C_{u,i}^{v}e_{v}
\end{equation}
where $C_{u,i}^{v}$ are constants independent of $r$.
Denote $R_{e_0e_ue_0e_v}$ by $R_{uv}$ when $e_0$ is fixed.
  Plugging $(2.1)$ in the Jacobian equation, we get
 \begin{equation}
  \sum\limits_{i}\sum\limits_{v} i(i-1)r^{i-2}C^{v}_{u,i}e_v=\sum\limits_{k}\sum\limits_{w}r^{k}C_{u,k}^{w}R(e_0,e_w)e_0.
\end{equation}
  Along the geodesic $l$,
 $$R(e_0,e_w)e_0=\sum\limits_{s=0}^{2n-1}\sum\limits_{j=0}^{\infty} \frac{R^{(j)}_{sw}}{j!}e_sr^j$$
where $R^{(j)}_{sw}$ denotes the $j$th covariant derivative of $R_{sw}$ along $e_0$ at $p$.
  Inserting it in $(2.2)$, we get
$$\sum\limits_{i,v}i(i-1)r^{i-2}C^v_{u,i}e_v=\sum\limits_{k,j,w,s}r^{k+j}C^w_{u,k}\frac{R^{(j)}_{sw}}{j!}e_s.$$

Comparing coefficients, we obtain
\begin{equation}
  C^v_{u,i}=\sum\limits_{k+j=i-2,w}C^w_{u,k}\frac{R^{(j)}_{vw}}{j!i(i-1)}.
\end{equation}

 A simple iteration gives
$$C^v_{u, 1} = \delta^v_u; C^w_{u, 2}=0; C^v_{u, 3}=\sum\limits_wC^w_{u, 1}\frac{R_{vw}}{6}=\frac{R_{uv}}{6};$$
$$C^v_{u, 4} = \sum\limits_wC^w_{u, 1}\frac{R'_{vw}}{12}= \frac{R'_{vu}}{12};$$
$$C^v_{u, 5} = \sum\limits_w(C^w_{u, 1}\frac{R''_{vw}}{40}+C^w_{u, 3}\frac{R_{vw}}{20})= \frac{1}{120}(\sum\limits_s R_{us}R_{sv}+3R''_{uv}).$$

  Employing (2.1), we have

\begin{equation}
  J_u=re_u+\frac{r^3}{6}R_{uv}e_v+\frac{r^4}{12}R'_{uv}e_v+\frac{r^5}{120}(\sum\limits_s R_{us}R_{sv}+3R''_{uv})e_v+O(r^6).
  \end{equation}

Using $dA$ to denote the standard measure of the unit tangent bundle $UT_p(M)$ at $p$, via exponential map, we write $\int_{\partial B(p, r)}dA$ as $\int$.
Defining $$W = \frac{\int \sqrt{det<J_u,J_v>}}{r^{2n-1}},$$ we introduce two propositions as follows:

\begin{prop}{ Under the same condition as in theorem 4, if the derivatives of $W$ with order from $1$ to $(2m-1)(m \geq 1)$ are the same as that of the complex space form, we have

Conclusion $1:$

 If $m = 2$, $Ric = K$ at $p$.

If $m \geq 3$, then $R_{i\overline{j}k\overline{l}} = \frac{K}{n+1}(\delta_{ij}\delta_{kl} + \delta_{il}\delta_{jk})$ at $p$. Moreover, for any unit vectors $u, v, e_0 \in UT_p(M)$,  $R^{(\lambda)}_{uv}=0$ for $1\leq\lambda\leq m-3$ and $Ric^{(l)}(e_0, e_0)=0$ for $1\leq l\leq 2m-4$. The superscripts are orders of covariant derivatives along direction $e_0$.

Conclusion $2:$ $W^{(2m)}$ is less than or equal to that of the complex space form.}
\end{prop}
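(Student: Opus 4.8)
The plan is to reduce $W$ to a single sphere integral and then read off its Taylor coefficients as curvature polynomials. Writing $J_u=\sum_v A^v_u e_v$ with $A^v_u=\sum_i r^iC^v_{u,i}$ from (2.1)--(2.4), one has $\langle J_u,J_v\rangle=(AA^T)_{uv}$, so that $\sqrt{\det\langle J_u,J_v\rangle}=\det A$ for small $r$. Restricting to the block orthogonal to $e_0$ (where $R_{u0}=0$ by antisymmetry) and factoring out $r^{2n-1}$ gives
\[
W=\int_{UT_pM}\det\!\big(I+B(r)\big)\,dA,\qquad B(r)=\sum_{k\ge 2}r^kB_k,
\]
where each matrix $B_k$ is a universal polynomial in the covariant derivatives of $R$ of \emph{weight} $k$ (a monomial $R^{(j_1)}\cdots R^{(j_s)}$ has weight $\sum_i j_i+2s$), and $B_k$ contains the single highest-derivative term $\tfrac{1}{(k-2)!(k+1)k}R^{(k-2)}$. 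Expanding $\det(I+B)=\exp\operatorname{tr}\log(I+B)$, the coefficient $D_N$ of $r^N$ is a universal polynomial of weight $N$, hence a homogeneous polynomial of degree $N$ in $e_0$. Integrating over the unit sphere kills all odd-degree pieces, so \emph{every odd derivative of $W$ vanishes for any metric}; matching orders $1$ to $2m-1$ is therefore the same as matching the even orders $2,4,\dots,2m-2$, and the only top-order (linear) survivor in $D_N$ is $\tfrac{1}{(N-2)!(N+1)N}\,Ric^{(N-2)}(e_0,e_0)$.

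Next I would extract Conclusion 1 from positivity. Since $\operatorname{tr}(B_2)=\tfrac16 Ric(e_0,e_0)$, matching $W^{(2)}$ forces $\int_{UT_pM}Ric(e_0,e_0)\,dA$ to equal its space-form value; combined with the hypothesis $Ric\ge K$ this gives $Ric(e_0,e_0)\equiv K$, i.e.\ $Ric=Kg$ at $p$, and moreover shows that $p$ minimizes $Ric(V,V)$ for $V$ parallel, so $Ric'(e_0,e_0)=0$ and $Ric''(e_0,e_0)\ge 0$ there. The remaining vanishing statements come inductively from the equality cases of the order-by-order inequalities: when $W^{(2k)}$ also matches, the sign-definite ``defect'' produced at that order (see below) must vanish, which pins the next layer of data. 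The stated ranges are exactly a weight count: a quadratic defect $|R^{(\lambda)}-(R^{SF})^{(\lambda)}|^2$ sits at weight $2\lambda+4$ and is forced to vanish once $2\lambda+4\le 2m-2$, giving $R=R^{SF}$ (the case $\lambda=0$, $m\ge3$) and $R^{(\lambda)}=0$ for $1\le\lambda\le m-3$; a linear Ricci-derivative term sits at weight $l+2$ and is pinned by positivity once $l+2\le 2m-2$, giving $Ric^{(l)}(e_0,e_0)=0$ for $1\le l\le 2m-4$.

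For Conclusion 2, I would substitute all of the vanishing from Conclusion 1 into $D_{2m}$ and show the difference $D_{2m}^M-D_{2m}^{SF}$ integrates to a nonpositive quantity. After the substitutions, the many trace-products $\operatorname{tr}(B_{k_1}\cdots B_{k_j})$ with $\sum_i k_i=2m$ collapse, and the surviving contribution is (minus) a sum of squares of the highest new defect plus a term proportional, via the contracted second Bianchi identity, to an iterated Laplacian of the scalar curvature at $p$, whose sign is controlled because $p$ minimizes $Ric$. The K\"ahler condition enters decisively here: the orthogonal (Bochner) decomposition of the K\"ahler curvature tensor is what both yields the sharp model constant $K/(n+1)$ and gives the sum of squares the correct sign, and the two technical lemmas are the device that forces the cross terms to drop out of the sphere integral.

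I expect the main obstacle to be precisely this last collapse: $D_{2m}$ is an unwieldy sum over all partitions of $2m$ into parts $\ge2$, and proving that, under the inductive vanishing, its integral over $UT_pM$ reorganizes into a manifestly sign-definite expression is where essentially all the work lies. This is also where the K\"ahler hypothesis is indispensable---without it the analogous Riemannian statement fails, consistent with the pointwise counterexample of Section 6---so the K\"ahler Bianchi identities and the symmetries of $\nabla^jR$ (encoded in the technical lemmas) must be used at every step to eliminate the otherwise uncontrollable mixed terms.
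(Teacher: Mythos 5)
Your outline reproduces the paper's overall strategy (Taylor expansion of $W$, order-by-order comparison, a negative quadratic ``defect'' plus linear terms, induction), and two of your observations are sound: the parity argument showing all odd coefficients of $W$ vanish is correct (each weight-$N$ fully traced monomial is homogeneous of degree $N$ in $e_0$, so odd orders integrate to zero over $UT_pM$), and your weight bookkeeping recovers the stated ranges $1\leq\lambda\leq m-3$ and $1\leq l\leq 2m-4$. But the two pivotal steps are asserted rather than proved, and for one of them the mechanism you propose would fail. First, the negative definiteness of the quadratic form is not a formal sum of squares $|R^{(\lambda)}-(R^{SF})^{(\lambda)}|^2$: in the paper it emerges from the explicit Jacobi recursion coefficients, e.g. $C^v_{u,m+1}=\frac{1}{m(m+1)}\bigl(\frac{R^{(m-2)}_{vu}}{(m-2)!}+C^v_{u,m-1}R_{vv}\bigr)$ and $C^u_{u,2m+1}$, and it is a delicate cancellation: the diagonal coefficient is proportional to $\frac{1}{2(2m+1)}-\frac{1}{2(m+1)}<0$, and even this requires the inductive hypothesis $Ric^{(m-2)}(e_0,e_0)=0$ to convert the cross term $2\sum_{u<v}C^u_{u,m+1}C^v_{v,m+1}$ into a negative diagonal contribution. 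Without these exact constants the sign is simply not available.

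The more serious gap concerns the unsigned linear terms. After the quadratic and Ricci pieces are isolated, what survives are integrals $\int R^{(m+i)}_{11}$ with $e_1=Je_0$, i.e. sphere averages of covariant derivatives of the holomorphic sectional curvature; these are \emph{not} Ricci traces and carry no sign, so your proposal to handle them ``by positivity'' because ``$p$ minimizes $Ric$'' cannot work --- minimality gives only $Ric^{(2m-2)}(e_0,e_0)\geq 0$ (and at best $\Delta s\geq 0$), and says nothing about $\Delta^k s$ for $k\geq 2$, nor about $\int R^{(m+i)}_{11}$ directly. The paper's Lemmas 3 and 4, which you leave as a black box, exist precisely to show these terms vanish \emph{exactly}: polarization $e_0=\sum_j(z_j\beta_j+\overline{z_j}\,\overline{\beta_j})$ upgrades the directional vanishing $R^{(\lambda)}_{uv}=0$ from Conclusion 1 to full tensorial vanishing $Rm^{(\lambda)}=0$ for $1\leq\lambda\leq m-3$ (itself requiring the justification that covariant derivatives may be commuted, which uses the inductive vanishing); then unitary invariance plus the rotation trick yielding $C_p=C_d\binom{d}{p}$ identifies $\int R^{(2m-4)}_{11}$ with $\sum_i g_{i,m}\Delta^i s$, and finally integrating $Ric^{(l)}=0$ over the sphere gives recursively $\Delta^k s=0$ at $p$. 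A related smaller gap: at $m=2$ the equality analysis forcing constant holomorphic sectional curvature rests on the K\"ahler identity $\int R_{11}\,dA=C_3 s$ together with a chain of Cauchy--Schwarz inequalities; your appeal to the Bochner decomposition gestures at this but does not derive it. As written, then, your induction cannot close: Conclusion 1's vanishing statements are both the output and, via the polarization lemma, an essential input of the step you postpone.
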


\begin{prop}{ Under the same condition as in theorem 4, if the derivatives of $W$ with order from $1$ to $(2m)(m \geq 1)$ are the same as the complex space form, $W^{(2m+1)} = 0$.}
\end{prop}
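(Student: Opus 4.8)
The plan is to express $W$ as an average over the unit sphere $UT_p(M)$ and to extract the vanishing of $W^{(2m+1)}$ from the parity of the resulting power series under the antipodal map $e_0 \mapsto -e_0$. Since the matrix $\langle J_u,J_v\rangle$ is $(2n-1)\times(2n-1)$ with leading term $r^2\delta_{uv}$, I first write $\langle J_u,J_v\rangle = r^2 H_{uv}(r,e_0)$, so that $\sqrt{\det\langle J_u,J_v\rangle}/r^{2n-1} = \sqrt{\det H}$ and $W(r) = \int \sqrt{\det H}\,dA$. By the recursion $(2.3)$ and the expansion $(2.4)$, $H_{uv} = \delta_{uv} + \sum_{k\ge 1} h^{[k]}_{uv}(e_0)\,r^k$ is a universal power series whose coefficients are polynomials in the curvature components $R^{(j)}_{uv}(e_0)$; hence $W(r) = \sum_{k\ge 0}\big(\int c_k(e_0)\,dA\big)\,r^k$, and it suffices to show $\int c_{2m+1}(e_0)\,dA = 0$.

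The key step is a parity bookkeeping. Assigning weight $j+2$ to each factor $R^{(j)}_{uv}$, the recursion $(2.3)$ shows that a monomial in $C^v_{u,i}$ assembled from factors $R^{(j_1)},\dots,R^{(j_N)}$ satisfies $i = 1 + \sum_s (j_s+2)$, so its total number of covariant derivatives $\sum_s j_s$ is congruent to $i-1$ modulo $2$. Forming $\langle J_u,J_v\rangle$, then $\det$ via the Leibniz expansion, and finally the square root via the binomial series --- all operations additive in this weight grading --- one checks that every monomial contributing to the coefficient $c_k(e_0)$ carries a number of covariant derivatives along $e_0$ congruent to $k \pmod 2$. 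In particular $c_{2m+1}(e_0)$ is a sum of contractions each containing an odd number of $e_0$-derivatives.

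Now I apply the antipodal map $e_0 \mapsto -e_0$, holding the perpendicular frame $\{e_u\}_{u\ge 1}$ fixed; this is legitimate because $\det\langle J_u,J_v\rangle$ does not depend on the chosen orthonormal frame of $e_0^\perp$. Since $R^{(j)}_{uv} = (\nabla^j_{e_0}R)(e_0,e_u,e_0,e_v)$ has $j$ covariant derivatives and two explicit $e_0$ slots, it transforms as $R^{(j)}_{uv}(-e_0) = (-1)^j R^{(j)}_{uv}(e_0)$. Each monomial of $c_{2m+1}$ therefore acquires the factor $(-1)^{\mathrm{odd}} = -1$, so $c_{2m+1}(-e_0) = -c_{2m+1}(e_0)$. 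As the antipodal map preserves the measure $dA$, we get $\int c_{2m+1}\,dA = \int c_{2m+1}(-e_0)\,dA = -\int c_{2m+1}\,dA$, whence $\int c_{2m+1}\,dA = 0$ and $W^{(2m+1)} = 0$.

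The hard part will be the middle bookkeeping step: confirming rigorously that the passage through the determinant and the square root preserves the congruence ``order $\equiv$ number of derivatives $\pmod 2$'' for \emph{every} monomial, not merely the leading ones. Organizing this through the weight grading above (or via $\det = \exp\operatorname{tr}\log$) is where the care lies. I note that this symmetry argument delivers $W^{(2m+1)} = 0$ on its own; the inductive hypothesis that the derivatives of $W$ up to order $2m$ agree with those of the complex space form is available and can be used to curtail the explicit computation, but the vanishing is at bottom a consequence of antipodal parity rather than of that hypothesis.
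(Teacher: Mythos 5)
Your proof is correct, and it takes a genuinely different---and in fact stronger---route than the paper's. The paper disposes of Proposition 2 in one paragraph of Section 5 by leaning on the hypothesis: since the derivatives of $W$ up to order $2m$ agree with those of the complex space form, conclusion 1 of Proposition 1 applies, the quadratic and higher contributions to the $(2m+1)$st coefficient are killed by the vanishing of $R^{(\lambda)}_{uv}$ and $Ric^{(l)}$, so $W^{(2m+1)}$ collapses to a linear combination of $\int R^{(m+i)}_{11}$; these are then shown to vanish either by the parity remark made after (4.22) (odd order) or by the unitary-invariance machinery of Lemma 4 together with $\Delta^k s=0$ from (4.28). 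You instead prove the unconditional statement that \emph{every} odd coefficient of $W$ vanishes, with no hypothesis on the lower-order derivatives and without using the K\"ahler structure at all. Your bookkeeping is sound: the recursion (2.3) forces $i=k+j+2$, so each monomial of $C^v_{u,i}$ carries a derivative count congruent to $i-1 \pmod 2$, and the passage through $\langle J_u,J_v\rangle$, the determinant, and the square root is additive in this grading (cleanest via $\det=\exp\operatorname{tr}\log$, which writes every contribution as traces of products of the matrices $(R^{(j)}_{uv})$, manifestly frame-invariant functions of $e_0$); the frame subtlety under $e_0\mapsto -e_0$ (the adapted frame has $e_1=Je_0$, which would flip) is correctly dispatched by the frame-invariance of the determinant, and $R^{(j)}_{uv}(-e_0)=(-1)^{j+2}R^{(j)}_{uv}(e_0)=(-1)^jR^{(j)}_{uv}(e_0)$ as you say. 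This antipodal-parity argument is classical---it is the reason Gray-type expansions of volumes of small geodesic balls contain only even powers of $r$---and what it buys is generality and brevity: the inductive hypothesis in the proposition's statement is revealed to be superfluous. What it does not buy is the curvature normal-form information (vanishing of specific covariant derivatives at $p$) that the paper's route extracts in Proposition 1 and reuses here; note the paper itself invokes your symmetry, but only for the odd-order linear terms, whereas you apply it globally to the whole coefficient $c_{2m+1}$.
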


We divide the proof of proposition 1 into two parts:
 $m = 1, 2$ and $m \geq 3$.

\section{\bf{The proof of proposition 1: Part I}}
This section treats the case $m =1, 2$.
By (2.1), we have
\begin{equation}
\frac{<J_u,J_v>}{r^2}=\sum\limits_{i,j,w}r^{i+j-2}C^w_{u,i}C^w_{v,j}.
\end{equation}
By (2.4),
$$\frac{<J_u,J_u>}{r^2} = 1 + \frac{R_{uu}}{3}r^2 + \frac{R_{uu}'}{6}r^3 + (\frac{2}{45}\sum\limits_{s}R_{us}^2
  + \frac{1}{20}R_{uu}'')r^4+ O(r^5).$$
If $u \neq v$,
$$\frac{<J_u,J_v>}{r^2} = \frac{1}{3}R_{uv}r^2 + \frac{R_{uv}'}{6}r^3 + (\frac{2}{45}\sum\limits_{s}R_{us}R_{vs} + \frac{1}{20}R_{uv}'')r^4+ O(r^5).$$

Now use the above two expressions to see that
\begin{equation}
\begin{aligned}
\frac{det<J_u,J_v>}{r^{4n-2}}&= 1 + \frac{1}{3}\sum\limits_{u}R_{uu}r^2 +
\frac{1}{6}\sum\limits_{u}R_{uu}'r^3 + (\frac{2}{45}\sum\limits_{u,s}R_{us}^2
  + \frac{1}{20}\sum\limits_{u}R_{uu}''\\&+ \frac{1}{9}\sum\limits_{u < v}R_{uu}R_{vv} - \frac{1}{9}\sum\limits_{u < v}R_{uv}^2)r^4 + O(r^5).
 \end{aligned}
\end{equation}

Considering the identity $\sqrt{1+x} = 1 + \frac{1}{2}x - \frac{1}{8}x^2 + O(x^3)$,
we get

\begin{equation}
\begin{aligned}
\frac{\sqrt{det<J_u,J_v>}}{r^{2n-1}}& = 1 + \frac{1}{6}\sum\limits_{u}R_{uu}r^2 +
\frac{1}{12}\sum\limits_{u}R_{uu}'r^3 + (\frac{1}{45}\sum\limits_{u,s}R_{us}^2
  + \frac{1}{40}\sum\limits_{u}R_{uu}''\\&+ \frac{1}{18}\sum\limits_{u < v}R_{uu}R_{vv} - \frac{1}{18}\sum\limits_{u < v}R_{uv}^2 - \frac{1}{72}(\sum\limits_{u}R_{uu})^2)r^4 + O(r^5).
  \end{aligned}
  \end{equation}
Since $W = \frac{\int \sqrt{det<J_u,J_v>}}{r^{2n-1}}$, we find
\begin{displaymath}
\begin{aligned}
W'(0) = 0, W''(0)=-cs
\end{aligned}
\end{displaymath}
where $c$ is a positive constant depending only on $n$, $s$ is the scalar curvature at $p$.
Therefore $W''(0)$ is less than or equal to that of the complex space form. This proves proposition 1 for $m = 1$.

Now we consider $m=2$. According to the assumption of proposition 1, $W''$ is the same as that of the complex space form.
Therefore $s = nK$ at $p$. Since the Ricci curvature is bounded from below by $K$, $Ric = Kg$ at $p$.
By (3.3), it is simple to see that the $r^3$ coefficient of $W$ is $0$ by symmetry.
Thus to complete the proof for $m = 2$, we just need to show that the 4th derivative of $W$ is less than or equal to that of the complex space form.

We keep in mind that $Ric = Kg$ at $p$.
The $r^4$ coefficient of $W$ is
\begin{displaymath}
\begin{aligned}
c_4&=\int (\frac{1}{45}\sum\limits_{u,s}R_{us}^2
  + \frac{1}{40}\sum\limits_{u}R_{uu}''+ \frac{1}{18}\sum\limits_{u < v}R_{uu}R_{vv} - \frac{1}{18}\sum\limits_{u < v}R_{uv}^2 - \frac{1}{72}(\sum\limits_{u}R_{uu})^2)\\&=\frac{1}{360}\int(8\sum\limits_{u}R^2_{uu}+ 16\sum\limits_{u<v}R^2_{uv} + 9 \sum\limits_uR_{uu}''
+20\sum\limits_{u<v}R_{uu}R_{vv}\\&-20\sum\limits_{u<v}R^2_{uv}-5(\sum\limits_uR_{uu})^2)\\&=
\frac{1}{360}\int(-2\sum\limits_{u}R^2_{uu}+10(\sum\limits_uR_{uu})^2-4\sum\limits_{u<v}R^2_{uv} + 9 \sum\limits_uR_{uu}''-5(\sum\limits_uR_{uu})^2)\\&=\frac{1}{360}\int(9\sum\limits_{u}R_{uu}''- 4\sum\limits_{u < v}R_{uv}^2 - 2\sum\limits_{u}R_{uu}^2+5(\sum\limits_uR_{uu})^2).
\end{aligned}
\end{displaymath}

Note that the Ricci curvature attains the minimum $K$ at $p$, so $$\sum\limits_{u}R''_{uu} = -Ric''(e_0,e_0) \leq 0.$$
Therefore we have
\begin{equation}
\begin{aligned}
  c_4&=\frac{1}{360}\int(9\sum\limits_{u}R_{uu}''- 4\sum\limits_{u < v}R_{uv}^2 - 2\sum\limits_{u}R_{uu}^2+5K^2)
  \\& \leq -\frac{1}{360}\int(2\sum\limits_{u}R_{uu}^2-5K^2) \\& = -\frac{1}{360}\int(2\sum\limits_{u \neq 1}R_{uu}^2 + 2R_{11}^2-5K^2) \\&\leq -\frac{1}{360}\int(\frac{1}{n-1}(\sum\limits_{u \neq 1}R_{uu})^2+2R^2_{11}-5K^2)
  \\& = -\frac{1}{360}\int(\frac{1}{n-1}(Ric(e_0, e_0)+R_{11})^2+2R^2_{11}-5K^2)\\& = -\frac{1}{360}\int (\frac{1}{n-1}K^2+\frac{2}{n-1}KR_{11}+(\frac{1}{n-1}+2)R^2_{11}-5K^2)\\& \leq -\frac{1}{360}(\int \frac{1}{n-1}K^2+\frac{2}{n-1}K\int R_{11}+C_1(\int R_{11})^2- \int 5K^2)\\& = C_2K^2.
  \end{aligned}
\end{equation}

In the inequalities above, $C_1, C_2$ are constants depending only on $n$.

We explain the inequalities above. In the first inequality, we drop the two terms $\sum\limits_{u < v}R_{uv}^2$ and $\sum\limits_{u}R_{uu}''$.
In the second inequality, we apply Schwartz inequality for directions $e_u$ that are perpendicular to $e_1, e_0$. In the third inequality we use Schwartz inequality
$\int R^2_{11} \geq C(\int R_{11})^2$. We make use of the K\"ahler condition to obtain $\int R_{11} = C_3s = nC_3K$, where $C_3$ is a constant depending only on $n$. This explains the last equality.

The right hand side of (3.4) is exactly the case of the complex space form. Therefore when $W', W''$ are the same as the complex space form, $W^{(3)} = 0$ and $W^{(4)}$ is less than or equal to that of the complex space form. (3.4) becomes an equality if and only if the holomorphic sectional curvature is constant at $p$ and $Ric''(e_0, e_0) = 0$ for any $e_0 \in UT_pM$.
This completes the proof for $m = 2$.

  \bigskip

\section{\bf{The proof of proposition 1: Part II}}

   This section deals with the case $m \geq 3$. Denote $Ric^{(l)}(e_0, e_0)$ by $Ric^{(l)}$.
According to the assumption of proposition 1, the derivatives of $W$ with order from 1 to $(2m-1)$ are the same
as the complex space form. Follow results in the last section, the holomorphic sectional curvature is constant
at $p$ and $Ric''= 0$ for any $e_0$. That is to say, at $p$,
$$R_{i\overline{j}k\overline{l}} = \frac{K}{n+1}(\delta_{ij}\delta_{kl} + \delta_{il}\delta_{jk}), Ric'' = 0.$$

  Therefore, we proved conclusion 1 of proposition 1 for $m = 3$.

Now we use induction. Assuming conclusion 1 of proposition 1 holds for $k = m$, we shall prove that for $k = m + 1$.

\bigskip

\begin{claim}{ Under the hypothesis of the induction above, $C^v_{u,i}$($i \leq m$) are constants independent of the
 direction $e_0$. In fact, they are the same as that of the complex space form($C^v_{u,i}$ is defined in (2.1)).}
\end{claim}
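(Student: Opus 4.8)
The natural route is an induction on the order $i$, running the recursion (2.3) forward and verifying at each stage that it reproduces the complex-space-form coefficient. The base cases are immediate: the values $C^v_{u,1}=\delta^v_u$ and $C^v_{u,2}=0$ recorded just after (2.3) are plainly independent of $e_0$ and already coincide with those of the complex space form. The crucial structural observation is that, for $i\le m$, the recursion only ever calls on covariant derivatives of the curvature of order at most $m-3$. Indeed, every summand in (2.3) carries a factor $C^w_{u,k}$, and since the Jacobi expansion (2.1) begins at $i=1$ we always have $k\ge 1$; hence the differentiation order obeys $j=i-2-k\le i-3\le m-3$. By the induction hypothesis (conclusion 1 for $k=m$), $R^{(j)}_{vw}=0$ for every $1\le j\le m-3$ and every frame pair, so all summands with $j\ge 1$ drop out and (2.3) collapses to
$$C^v_{u,i}=\frac{1}{i(i-1)}\sum_w C^w_{u,i-2}\,R_{vw},\qquad i\le m.$$
This is exactly the recursion one solves on the complex space form, where the curvature is parallel and all positive-order covariant derivatives vanish identically.

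It therefore remains to identify the single undifferentiated input $R_{vw}=R_{e_0e_ve_0e_w}$ with its complex-space-form value and to confirm that it does not depend on $e_0$; I expect this to be the only point with genuine content, the rest being index bookkeeping. Here one invokes the constant-holomorphic-sectional-curvature conclusion $R_{i\overline{j}k\overline{l}}=\frac{K}{n+1}(\delta_{ij}\delta_{kl}+\delta_{il}\delta_{jk})$, which pins down the entire curvature tensor at $p$. Evaluated in the $J$-adapted parallel frame $\{e_k\}$ (so that $e_1=Je_0$), the standard real expression for the curvature of constant holomorphic sectional curvature yields a fixed diagonal matrix $(R_{vw})$, proportional to $\delta_{vw}+3\delta_{v1}\delta_{w1}$, whose entries depend only on the index pattern and not on the particular choice of $e_0$ or on how the frame is completed.

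Feeding this back into the collapsed recursion closes the induction on $i$: if $C^w_{u,i-2}$ is, by the inductive step, a constant matching the complex space form, then so is $C^v_{u,i}$, since both $R_{vw}$ and the scalar $\tfrac{1}{i(i-1)}$ are $e_0$-independent and agree with the model. Hence $C^v_{u,i}$ is constant in $e_0$ and equal to the complex-space-form coefficient for all $i\le m$, which is the assertion of the claim. The whole argument hinges on the twin facts supplied by conclusion 1 of the induction hypothesis, namely the vanishing of covariant derivatives of the curvature up to order $m-3$ and the constancy of the holomorphic sectional curvature at $p$; the counting bound $j\le i-3$ is precisely what matches the needed derivative orders to the range guaranteed to vanish.
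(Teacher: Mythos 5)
Your proposal is correct and is exactly the argument the paper compresses into its one-line proof (``insert the induction hypothesis in (2.3)''): since $k\geq 1$ forces $j\leq i-3\leq m-3$, the vanishing of $R^{(j)}_{uv}$ for $1\leq j\leq m-3$ collapses (2.3) to the undifferentiated recursion, and the constant holomorphic sectional curvature pins $R_{vw}$ to an $e_0$-independent matrix in the $J$-adapted frame. Your write-up simply makes the induction on $i$ and the base cases explicit, which matches the paper's intent.
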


\begin{proof}

 Claim 1 follows if we insert the induction hypothesis in (2.3).

\end{proof}

   Let us write
\begin{equation}
\begin{aligned}
   \frac{det<J_u,J_v>}{r^{4n-2}} = 1 + \sum\limits_{i=1}^{m-1}a_ir^i+\sum\limits_{j=m}^{2m}b_jr^j+O(r^{2m+1}).
  \end{aligned}
\end{equation}

Combining claim 1 with (3.1), we find that $a_i$ are constants independent of the direction $e_0$.
(3.1) also yields $C^v_{u,m+1} = C^u_{v,m+1}$ for all $u, v$.
Direct expansion of the determinant via (3.1) gives
\begin{equation}
\begin{aligned}
  b_{2m}=&\sum\limits_{u,v}(C^v_{u,m+1})^2+4\sum\limits_{u<v}C^u_{u,m+1}C^v_{v.m+1}+2\sum\limits_uC^u_{u,2m+1}-4\sum\limits_{u<v}C^v_{u,m+1}C^u_{v,m+1} \\&+\sum\limits_{i=1}^{m}C^v_{u,m+i}C_{i,m,u,v}+C_{0,m}
  \end{aligned}
\end{equation}
where $C_{i,m,u,v}$ and $C_{0,m}$ are all constants independent of the direction $e_0$.

Note also
\begin{equation}
\begin{aligned}
b_{m}= 2\sum\limits_{u}C^{u}_{u,m+1} + Constant.
\end{aligned}
\end{equation}

   Applying $\sqrt{1+x} = 1 + \frac{1}{2}x - \frac{1}{8}x^2 + \sum\limits_{k=3}^{\infty}\lambda_kx^k$($|x| < 1 $), we obtain

\begin{equation}
\begin{aligned}
  \frac{\sqrt{det<J_u,J_v>}}{r^{2n-1}}&= 1 + \frac{1}{2} (\sum\limits_{i=1}^{m-1}a_ir^i+\sum\limits_{j=m}^{2m}b_jr^j) - \frac{1}{8}(\sum\limits_{i=1}^{m-1}a_ir^i+\sum\limits_{j=m}^{2m}b_jr^j)^2\\& + \sum\limits_{k=3}^{\infty}\lambda_k (\sum\limits_{i=1}^{m-1}a_ir^i+\sum\limits_{j=m}^{2m}b_jr^j)^k+O(r^{2m+1}).
   \end{aligned}
\end{equation}

 \begin{lemma} { the $2m$th order coefficient of the expansion of $W$ is 
\begin{equation}
\begin{aligned}
   c_{2m}&=\int (\frac{1}{2}\sum\limits_{u,v}(C^v_{u,m+1})^2+2\sum\limits_{u<v}C^u_{u,m+1}C^v_{v,m+1}+\sum\limits_uC^u_{u,2m+1}\\&-2\sum\limits_{u<v}C^v_{u,m+1}C^u_{v,m+1} -\frac{1}{2}(\sum\limits_{u}C^{u}_{u,m+1})^2+\sum\limits_{i=1}^{m}C^v_{u,m+i}\widetilde{C}_{i,m,u,v})+\widetilde{C}_{0,m}
  \end{aligned}
  \end{equation}
  where $\widetilde{C}_{i,m,u,v}$ and $\widetilde{C}_{0,m}$ are constants independent of the direction $e_0$.}
\end{lemma}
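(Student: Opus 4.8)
The plan is to extract the coefficient of $r^{2m}$ directly from the expansion (4.4), organizing everything around a single structural observation: \emph{among the coefficients $b_j$ with $m\le j\le 2m-1$, each is an affine-linear function of the ``high-order'' coefficients $\{C^v_{u,s}:s\ge m+1\}$, with all of these coefficients independent of $e_0$.} I would prove this first. By Claim 1 every $C^v_{u,i}$ with $i\le m$ equals its complex-space-form value and is direction-independent, so the only direction-dependence in $\det\langle J_u,J_v\rangle/r^{4n-2}$ enters through the high-order $C$'s. In $\langle J_u,J_v\rangle/r^2=\sum_{i,j,w}r^{i+j-2}C^w_{u,i}C^w_{v,j}$ a factor $C_{\cdot,m+s}$ with $s\ge1$ first appears at order $r^{m+s-1}$ (paired with $C_{\cdot,1}$). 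Hence a product of two high-order factors, whether inside one matrix entry or split across two distinct entries of the determinant expansion, carries order at least $r^{2m}$. Consequently, for $j\le 2m-1$ no $b_j$ can contain a quadratic (or higher) monomial in the high-order $C$'s, which is the claimed affine-linearity; moreover only $C_{\cdot,m+1},\dots,C_{\cdot,2m}$ can occur, since $C_{\cdot,m+s}$ enters $\det$ only at order $\ge r^{m+s-1}$.

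With this in hand I would assemble the $r^{2m}$ coefficient term by term from $\sqrt{1+x}=1+\tfrac12 x-\tfrac18 x^2+\sum_{k\ge3}\lambda_k x^k$, where $x=\sum_{i=1}^{m-1}a_ir^i+\sum_{j=m}^{2m}b_jr^j$. The linear piece $\tfrac12 x$ contributes exactly $\tfrac12 b_{2m}$, i.e. half of (4.2); this produces the quadratic terms in $C_{\cdot,m+1}$ and the term $\sum_u C^u_{u,2m+1}$ displayed in (4.5), while the tail $\tfrac12\sum_i C^v_{u,m+i}C_{i,m,u,v}+\tfrac12 C_{0,m}$ of (4.2) is absorbed into the correction and constant terms. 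For $-\tfrac18 x^2$, writing $P:=\sum_{i\le m-1}a_ir^i$ one has $\deg P^2\le 2m-2$, so $P^2$ never reaches $r^{2m}$; the $r^{2m}$ coefficient of $x^2$ is therefore $2\sum_{i=1}^{m-1}a_i b_{2m-i}+b_m^2$. Using (4.3) in the form $b_m=2\sum_u C^u_{u,m+1}+\mathrm{const}$ gives $-\tfrac18 b_m^2=-\tfrac12(\sum_u C^u_{u,m+1})^2$ plus terms affine-linear in $C_{\cdot,m+1}$, which is precisely the term $-\tfrac12(\sum_u C^u_{u,m+1})^2$ of (4.5); and since each $b_{2m-i}$ ($1\le i\le m-1$) is affine-linear in the high-order $C$'s by the structural observation, $2\sum_i a_i b_{2m-i}$ feeds only into $\sum_{i=1}^m C^v_{u,m+i}\widetilde C_{i,m,u,v}$ and $\widetilde C_{0,m}$.

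For the higher powers $x^k$ ($k\ge3$) I would check by a degree count that they create no new leading terms. Writing $x=P+Q$ with $Q=\sum_{j\ge m}b_jr^j$, any monomial in $x^k$ containing two or more factors of $Q$ has degree $\ge 2m$, strictly unless $k=2$; hence for $k\ge3$ only $P^k$ and $kP^{k-1}Q$ can reach $r^{2m}$. The term $P^k$ is a direction-independent constant (absorbed into $\widetilde C_{0,m}$), while in $kP^{k-1}Q$ the factor $Q$ is forced to supply $b_{2m-d}$ with $d\ge k-1\ge2$, i.e. one of the intermediate coefficients $b_m,\dots,b_{2m-2}$, again affine-linear in the high-order $C$'s; so these too contribute only to the $\widetilde C_{i,m,u,v}$ and $\widetilde C_{0,m}$ terms. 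Collecting the surviving contributions and integrating over $UT_pM$ (the constant merely rescales into $\widetilde C_{0,m}$) yields (4.5).

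The main obstacle is the structural observation in the first paragraph: one must argue cleanly that, below order $r^{2m}$, the determinant of the Jacobi Gram matrix is affine-linear in the high-order Jacobi coefficients and that no direction-dependence leaks into its coefficients. Everything else is bookkeeping of the $\sqrt{1+x}$ expansion, for which the degree bounds above confine every genuinely nonlinear or direction-dependent contribution to order $r^{2m}$ itself, entering solely through $b_{2m}$ and $b_m^2$, exactly where the stated formula predicts.
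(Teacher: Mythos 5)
Your proof is correct and follows essentially the same route as the paper: you extract the $r^{2m}$ coefficient of the $\sqrt{1+x}$ expansion term by term, with $\tfrac{1}{2}b_{2m}$ yielding (4.6), the $b_m^2$ piece of $-\tfrac{1}{8}x^2$ yielding $-\tfrac{1}{2}\bigl(\sum_u C^u_{u,m+1}\bigr)^2$ via (4.3), and every remaining contribution being affine-linear in the high-order $C$'s with $e_0$-independent coefficients. Your explicit degree count (the \emph{structural observation} that a product of two factors $C_{\cdot,m+s}$, $s\geq 1$, only enters at order $r^{2m}$ or beyond) is exactly the fact the paper leaves implicit when it asserts that $p_{i,m,u,v}$, $q_{i,m,u,v}$, $p_{0,m}$ and $q_{0,m}$ in (4.7)--(4.9) are constants independent of the direction $e_0$.
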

 \begin{proof}

 It suffices to find out the contribution of each term in (4.4) to $c_{2m}$.
We keep in mind that coefficients $a_i$ in (4.1) are independent of $e_0$.

By (4.2), the contribution of term $1 + \frac{1}{2} (\sum\limits_{i=1}^{m-1}a_ir^i+\sum\limits_{j=m}^{2m}b_jr^j)$ to $c_{2m}$ is
\begin{equation}
\begin{aligned}
 &\int\frac{1}{2}\sum\limits_{u,v}(C^v_{u,m+1})^2+2\sum\limits_{u<v}C^u_{u,m+1}C^v_{v,m+1}+\sum\limits_uC^u_{u,2m+1}-2\sum\limits_{u<v}C^v_{u,m+1}C^u_{v,m+1}
 \\&+\frac{1}{2}(\sum\limits_{i=1}^{m}C^v_{u,m+i}C_{i,m,u,v}+C_{0,m}).
 \end{aligned}
  \end{equation}
The contribution of the term $- \frac{1}{8}(\sum\limits_{i=1}^{m-1}a_ir^i+\sum\limits_{j=m}^{2m}b_jr^j)^2$ to $c_{2m}$ is
\begin{equation}
\begin{aligned}
-\int(\frac{1}{8}b^2_{m}+ \sum\limits_{i=1}^{m}C^v_{u,m+i}p_{i,m,u,v})+p_{0,m}.
\end{aligned}
  \end{equation}
By (4.3), it could be written as
\begin{equation}
\begin{aligned}
-\int(\frac{1}{2}(\sum\limits_{u}C^{u}_{u,m+1})^2+ \sum\limits_{i=1}^{m}C^v_{u,m+i}p_{i,m,u,v})+p_{0,m}.
\end{aligned}
  \end{equation}
The contribution of $\sum\limits_{k=3}^{\infty}\lambda_k (\sum\limits_{i=1}^{m-1}a_ir^i+\sum\limits_{j=m}^{2m}b_jr^j)^k$ to $c_{2m}$ is
\begin{equation}
\begin{aligned}
  \int\sum\limits_{i=1}^{m}C^v_{u,m+i}q_{i,m,u,v}+q_{0,m}.
\end{aligned}
  \end{equation}

 In (4.7), (4.8), (4.9), $p_{i,m,u,v}$, $q_{i,m,u,v}$, $p_{0,m}$ and $q_{0,m}$ are all constants independent of the direction $e_0$. Lemma 1 follows if we combine (4.6), (4.7), (4.8) and (4.9).
 \end{proof}

 \begin{lemma}{
\begin{equation}
\begin{aligned}
 c_{2m}= \int Q(R^{(m-2)}_{uv})+ \sum\limits_{i=-2}^{m-4}h_{m,i}\int R^{(m+i)}_{11} +C_m\int Ric^{(2m-2)}+ Constant
\end{aligned}
\end{equation}
 where $Q$ is a negative definite quadratic form, $h_{m,i}$ are constants and $C_m$ is a negative constant.}
\end{lemma}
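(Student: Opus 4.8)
The plan is to take the formula for $c_{2m}$ furnished by lemma 1 and substitute, via the recursion (2.3), the expressions for the coefficients $C^v_{u,m+i}$ in terms of covariant derivatives of the curvature, then sort the resulting integrand by order of differentiation and by degree (quadratic versus linear in the ``new'' high-order data). Throughout I use the induction hypothesis (conclusion 1 for $k=m$): at $p$ the holomorphic sectional curvature is constant, $R^{(\lambda)}_{uv}=0$ for $1\le\lambda\le m-3$, and $Ric^{(l)}=0$ for $1\le l\le 2m-4$. Under this hypothesis the recursion (2.3) collapses: in the sum defining $C^v_{u,m+1}$ every intermediate term with $1\le j\le m-3$ vanishes, so
\[
C^v_{u,m+1}=\frac{R^{(m-2)}_{vu}}{(m-2)!\,m(m+1)}+D_{uv},
\]
where $D_{uv}$ is assembled from the order-$0$ curvature and the lower coefficients $C^w_{u,m-1}$, all of which equal their model values by claim 1. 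Likewise $C^v_{u,m+i}$ ($2\le i\le m$) contributes the single top derivative $R^{(m+i-3)}_{vu}$ plus model-matching lower-order data, while $C^u_{u,2m+1}$ contributes $R^{(2m-2)}_{uu}$ plus lower data.

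Substituting into lemma 1, I read off three kinds of contribution. The genuinely quadratic terms arise only from products of two factors of type $C^{\cdot}_{\cdot,m+1}$. Writing $X_{uv}:=\alpha_m R^{(m-2)}_{vu}$ with $\alpha_m=1/((m-2)!\,m(m+1))$ and using the symmetry $X_{uv}=X_{vu}$ (which follows from the stated identity $C^v_{u,m+1}=C^u_{v,m+1}$), the four quadratic terms of lemma 1 combine to $-\tfrac12\sum_u X_{uu}^2-\sum_{u<v}X_{uv}^2+\tfrac12(\sum_u X_{uu})^2$. The last, positive, piece equals $\tfrac{\alpha_m^2}{2}(Ric^{(m-2)})^2$, which vanishes identically because $m-2\le 2m-4$ forces $Ric^{(m-2)}=0$ by the induction hypothesis. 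Hence the pure quadratic part is exactly $\int Q(R^{(m-2)}_{uv})$ with $Q=-\tfrac{\alpha_m^2}{2}\sum_u (R^{(m-2)}_{uu})^2-\alpha_m^2\sum_{u<v}(R^{(m-2)}_{uv})^2$, which is manifestly negative definite.

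The remaining contributions are either linear in the high-order data (a high derivative contracted against model data) or pure model data; the latter integrates to the asserted constant term. Each linear piece is a contraction $\sum_{u,v}R^{(j)}_{vu}\,T_{uv}$ in which $T_{uv}$ is built from the constant-holomorphic-sectional-curvature tensor at $p$ together with the model coefficients. Here the K\"ahler condition is decisive: it forces $T_{uv}$ to carry the isotropy symmetry of the model, so the contraction reduces to the two invariant traces $\sum_u R^{(j)}_{uu}=-Ric^{(j)}$ and the holomorphic component $R^{(j)}_{11}$ (recall $e_1=Je_0$ and $R^{(j)}_{0v}\equiv 0$). For $j\le 2m-4$ the trace vanishes by induction, leaving only $R^{(j)}_{11}$; these orders run from $j=m-2$ (cross terms in $C^{\cdot}_{\cdot,m+1}$) up to $j=2m-3$ (from $C^v_{u,2m}$). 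The top order $j=2m-3$ is odd, so both $R^{(2m-3)}_{11}$ and $Ric^{(2m-3)}$ are odd-degree homogeneous functions of $e_0$ and integrate to zero over $UT_pM$; this removes the would-be $j=2m-3$ term and explains why the index $i=j-m$ stops at $m-4$. Finally, the unique order-$(2m-2)$ contribution comes from $+\sum_u C^u_{u,2m+1}$, whose top piece is $\sum_u R^{(2m-2)}_{uu}=-Ric^{(2m-2)}$; with coefficient $+1$ this yields $C_m\int Ric^{(2m-2)}$ with $C_m=-1/((2m-2)!\,(2m+1)(2m))<0$, the sign being exactly the one recorded in the $m=2$ computation $\sum_u R^{(l)}_{uu}=-Ric^{(l)}$.

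The main obstacle is the third step, namely controlling the linear contractions: one must verify, for every $i$, that the complicated coefficient tensors $T_{uv}$ produced by iterating (2.3) really do inherit the isotropy symmetry of the model and therefore contract $R^{(j)}_{vu}$ only through the trace and the $e_1$-direction. This is precisely where the K\"ahler-specific identities (lemmas 3 and 4) are needed to tame the higher covariant derivatives, and where the bulk of the bookkeeping lives. Once that reduction is secured, combining the induction hypothesis $Ric^{(j)}=0$ ($j\le 2m-4$) with the parity vanishing of odd-degree integrands leaves exactly the terms listed, completing the proof of lemma 2.
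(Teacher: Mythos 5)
Your overall route is the paper's own: substitute the recursion (2.3) into the Lemma 1 formula for $c_{2m}$, isolate the quadratic part in $R^{(m-2)}_{uv}$, kill the trace term using $Ric^{(m-2)}=0$, reduce the linear contractions to $R^{(m+i)}_{11}$ and $Ric^{(m+i)}$ by unitary invariance of the coefficients, invoke parity at odd order, and extract $C_m<0$ from the $k=1$ term in the recursion for $C^u_{u,2m+1}$ --- all of this matches the paper's (4.11)--(4.21), including your value $C_m=-1/((2m-2)!\,(2m+1)(2m))$. But there is one genuine gap: your claim that \emph{the genuinely quadratic terms arise only from products of two factors of type} $C^{\cdot}_{\cdot,m+1}$ is false. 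The recursion for $C^u_{u,2m+1}$ contains the term with $k=m+1$, $j=m-2$, namely $\sum_w C^w_{u,m+1}R^{(m-2)}_{uw}/\bigl((m-2)!\,2m(2m+1)\bigr)$, and since $C^w_{u,m+1}$ itself carries $\alpha_m R^{(m-2)}_{wu}$, the single Lemma 1 term $\sum_u C^u_{u,2m+1}$ contributes the quadratic piece $\frac{(m+1)\alpha_m^2}{2(2m+1)}\sum_{u,w}\bigl(R^{(m-2)}_{uw}\bigr)^2$. This is exactly the paper's (4.20), and it is \emph{positive}, so it works against the negativity you need; your stated $Q$ omits it, hence the quadratic form you wrote down is not the correct one and negative definiteness is not established by your argument as written.

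The conclusion does survive, but only after the coefficient comparison the paper's combination of (4.17)--(4.21) performs: adding the missed piece to your $-\frac{\alpha_m^2}{2}\sum_u\bigl(R^{(m-2)}_{uu}\bigr)^2-\alpha_m^2\sum_{u<v}\bigl(R^{(m-2)}_{uv}\bigr)^2$ changes the diagonal coefficient to $-\frac{m}{2(2m+1)}\alpha_m^2$ and the off-diagonal one to $-\frac{m}{2m+1}\alpha_m^2$, which are negative precisely because $\frac{m+1}{2(2m+1)}<\frac12$. You should also verify, rather than assert, that no other hidden quadratic arises: in $C^v_{u,m+i}$ ($i\le m$) any product $C^w_{u,m+1}R^{(j)}_{vw}$ has $j\le m-3$, where $1\le j\le m-3$ vanishes by the induction hypothesis and $j=0$ gives a model constant, so those terms are genuinely linear --- this check is part of the proof. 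Two minor remarks: the symmetry $R^{(m-2)}_{uv}=R^{(m-2)}_{vu}$ holds already from the symmetries of the curvature tensor, independently of $C^v_{u,m+1}=C^u_{v,m+1}$; and at order $j=2m-3$ the paper uses $Ric^{(2m-3)}=0$ (forced since Ricci attains its minimum at $p$) together with parity under the integral, consistent with your parity argument.
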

 \begin{proof}

  By the induction hypothesis and (2.3), we have

\begin{equation}
\begin{aligned}
C^u_{u,2m+1}&=\sum\limits_{k+j=2m-1,w}\frac{C^w_{u,k}R^{(j)}_{uw}}{j!(2m+1)2m}\\&=\frac{1}{(2m+1)2m}(\sum\limits_w(
  \frac{R^{(m-2)}_{uw}C^w_{u,m+1}}{(m-2)!}\\&+\sum\limits_{j=m-1}^{2m-2}B_{j,m,w,u}R^{(j)}_{uw})+R_{uu}C^u_{u,2m-1})
 \end{aligned}
\end{equation}

  where $B_{j,m,w,u}$ are constants.
For $i \leq m$, we have

\begin{equation}
C^v_{u,m+i}=\sum\limits_{j=m-2}^{m+i-3}d_{m,i,j,w,u}R^{(j)}_{uw}+ Constant
\end{equation}
where $d_{m,i,j,w,u}$ are constants. In particular, we have
\begin{equation}
   C^{v}_{u,m+1}=\sum\limits_{k+j=m-1, w}C^w_{u,k}\frac{R^{(j)}_{vw}}{j!m(m+1)}= \frac{1}{m(m+1)}(\frac{R^{(m-2)}_{vu}}{(m-2)!}+C^{v}_{u,m-1}R_{vv}).
\end{equation}

  By the induction hypothesis,
\begin{equation}
   \sum\limits_uR^{(m-2)}_{uu}=-Ric^{(m-2)}=0.
 \end{equation}
 Therefore
\begin{equation}
\begin{aligned}
   \sum\limits_u(R^{(m-2)}_{uu})^2&=(\sum\limits_uR^{(m-2)}_{uu})^2-2\sum\limits_{u<v}R^{(m-2)}_{uu}R^{(m-2)}_{vv}\\&=
   -2\sum\limits_{u<v}R^{(m-2)}_{uu}R^{(m-2)}_{vv}.
\end{aligned}
\end{equation}

Inserting $(4.11),(4.12),(4.13)$ in $(4.5)$, we find
\begin{equation}
  c_{2m}=\int Q(R^{(m-2)}_{uv})+ \sum\limits_{i=-2}^{m-2}\int \sum\limits_{u, v}h_{m,i,u,v}R^{(m+i)}_{uv} + Constant.
\end{equation}

Now we prove that $Q$ is negative definite. Let us check each term in (4.5).

By (4.13) ,the term $\frac{1}{2}\sum\limits_{u,v}(C^v_{u,m+1})^2$ in (4.5) contributes to the quadratic term
\begin{equation}
\sum\limits_{u,v}\frac{1}{2m^2(m+1)^2((m-2)!)^2}(R_{uv}^{(m-2)})^2.
\end{equation}

 The term $2\sum\limits_{u<v}C^u_{u,m+1}C^v_{v,m+1}$ contributes to the quadratic term
\begin{equation}
\sum\limits_{u<v}\frac{2}{m^2(m+1)^2((m-2)!)^2}R_{uu}^{(m-2)}R_{vv}^{(m-2)}.
\end{equation}

By (4.15), it could be written as
\begin{equation}
 -\frac{1}{m^2(m+1)^2((m-2)!)^2}\sum\limits_u(R^{(m-2)}_{uu})^2.
\end{equation}

By (4.11) and (4.13), the term $\sum\limits_{u}C_{u,2m+1}^u$ contributes to the quadratic term
\begin{equation}
\sum\limits_{u,v}\frac{1}{2m^2(m+1)(2m+1)((m-2)!)^2}(R_{uv}^{(m-2)})^2.
\end{equation}

The term $-2\sum\limits_{u<v}C^v_{u,m+1}C^u_{v,m+1}$ contributes to the quadratic term
\begin{equation}
-\sum\limits_{u<v}\frac{2}{m^2(m+1)^2((m-2)!)^2}(R_{uv}^{(m-2)})^2.
\end{equation}

The term $-\frac{1}{2}(\sum\limits_{u}C^{u}_{u,m+1})^2$ is obvious semi-negative definite.

Combine (4.17), (4.18), (4.19), (4.20) and (4.21), it follows that the quadratic form in (4.10) is negative definite.

\bigskip

Consider the linear terms in (4.16).
By the induction hypothesis, the coefficients $h_{m, i, u, v}$ are unchanged if we take a unitary
transformation keeping the direction $e_0$ fixed. Comparing the coefficients of the linear order terms, we see $h_{m, i, u, v}$ = 0 if $u \neq v$; $h_{m, i, u, u} = h_{m, i, v, v}$ if $u \neq e_1$ and $v \neq e_1$. Therefore, the linear
 terms $h_{m, i, u, u}R^{(m+i)}_{uu}$ could be absorbed in $Ric^{(m+i)}$ with the terms $-h_{m, i}R^{(m+i)}_{11}$ left. Also note that
by induction hypothesis, $Ric^{(l)} = 0$ for $0 < l \leq 2m-3$($Ric^{(2m-3)}$ vanishes as the Ricci curvature attains its minimum at $p$). Finally, one verifies
 that $\sum\limits_{u}C_{u,2m+1}^u$ is the only term in (4.5) that has contribution to $R_{uv}^{(2m-2)}$. Therefore the linear terms in (4.16) could be written as $\sum\limits_{i=-2}^{m-4}h_{m,i}\int R^{(m+i)}_{11} + C_m\int Ric^{(2m-2)}$. From (4.11), it is simple to check that $C_m$ is negative.

 \end{proof}

\bigskip

  By the induction hypothesis and that the Ricci curvature attains its minimum at $p$, we have $Ric^{(2m-2)}\geq 0$. It follows from lemma 2 that
\begin{equation}
c_{2m} \leq  \sum\limits_{i=-2}^{m-4}h_{m,i}\int R^{(m+i)}_{11} + Constant.
\end{equation}

We would like to prove that the linear terms $\int R^{(m+i)}_{11}$ vanish for $-2\leq i\leq m-4$. Note that by symmetry, if $m+i$ is odd, the integral equals $0$.
 Let us deal with case when $m+i$ is even. We shall check when $i=m-4$. Other cases are similar. Let
\begin{equation}
A=-\frac{1}{4}\int R^{(2m-4)}_{11}.
\end{equation}

Set up an orthonormal frame $\{f_i\}$ at $p$ such that $Jf_{2j}=f_{2j+1}, Jf_{2j+1}=-f_{2j}$ for $0\leq j\leq n-1$. Letting $\beta_j= \frac{1}{2} (f_{2j}-\sqrt{-1}f_{2j+1})$,  in a small neighborhood of $p$, we parallel transport the frame along each geodesic through $p$.
Suppose
\begin{equation}
  e_0= \sum\limits_{j=0}^{n-1}(z_j\beta_j+\overline{z_j}{\overline{\beta_j}}).
\end{equation}
\begin{lemma}{ Under the assumption of the induction in proposition 1,  $Rm^{(\lambda)}=0$ at $p$
 for $1\leq \lambda \leq m-3$, where $Rm^{(\lambda)}$ denotes any covariant derivative of the curvature tensor with order $\lambda$ at $p$.}
\end{lemma}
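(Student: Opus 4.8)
The plan is to prove the lemma by an inner induction on $\lambda$, running from $1$ up to $m-3$. For each $\lambda$ the input is the vanishing, supplied by the induction hypothesis of proposition 1, of the fully contracted components $R^{(\lambda)}_{uv}=(\nabla^{\lambda}_{e_0\cdots e_0}Rm)(e_0,e_u,e_0,e_v)=0$ for all unit vectors $e_0,e_u,e_v$; the goal is to upgrade this to the vanishing of every component of the tensor $\nabla^{\lambda}Rm$ at $p$. I would assume, as inner induction hypothesis, that $\nabla^{\mu}Rm=0$ at $p$ for all $1\le\mu<\lambda$ (vacuous when $\lambda=1$), and record that by conclusion $1$ we have $Rm|_p=R^0$, where $R^0$ denotes the constant holomorphic sectional curvature tensor of the model $N_K$.

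The first step is polarization. Fixing vectors $X,Y$, the scalar $e_0\mapsto(\nabla^{\lambda}_{e_0\cdots e_0}Rm)(e_0,X,e_0,Y)$ is a homogeneous polynomial of degree $\lambda+2$ in $e_0$ that vanishes identically. By the polarization identity, the symmetrization of $\nabla^{\lambda}Rm$ over its $\lambda$ derivative slots together with the first and third curvature slots vanishes, regarded as a bilinear form in the remaining two (the $X$ and $Y$) slots. This is the only analytic input; the rest is algebra with the tensor symmetries.

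The second step is to show that $\nabla^{\lambda}Rm$ is totally symmetric in its $\lambda$ derivative indices at $p$. Reordering covariant derivatives changes $\nabla^{\lambda}Rm$ by a universal algebraic expression in the lower covariant derivatives $\nabla^{j}Rm$ with $0\le j\le\lambda-2$, produced by repeated use of the Ricci identity $[\nabla_a,\nabla_b]T=Rm\ast T$. At $p$ these arguments take the values $\nabla^0 Rm=R^0$ and $\nabla^{j}Rm=0$ for $1\le j\le\lambda-2$ (the latter by the inner induction hypothesis). These are exactly the values the same covariant derivatives take on the model $N_K$, where moreover every $\nabla^{\lambda}Rm$ vanishes identically because $N_K$ is locally symmetric. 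Since the reordering discrepancy is the same universal expression evaluated at the same arguments, it must vanish at $p$ as well; hence the derivative indices of $\nabla^{\lambda}Rm$ may be freely permuted at $p$.

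The decisive and hardest step is to pass from the symmetrized component to the full tensor, and here the Kähler condition is essential. In a unitary frame the Kähler symmetries $R_{i\overline{j}k\overline{l}}=R_{k\overline{j}i\overline{l}}=R_{i\overline{l}k\overline{j}}$, which persist for $\nabla^{\lambda}Rm$ because $\nabla J=0$, make $\nabla^{\lambda}Rm$ symmetric within its holomorphic index group and within its anti-holomorphic index group; it is thus an Hermitian form on a symmetric power, and such a form is determined by its diagonal values, which are precisely what the polarized sectional data records. Reconciling the polarized symmetrization over curvature slots $1,3$ with the derivative slots by means of the first and second Bianchi identities, I expect to express an arbitrary component of $\nabla^{\lambda}Rm$ as a combination of the vanishing diagonal data, forcing $\nabla^{\lambda}Rm=0$ at $p$ and closing the induction. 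The main obstacle is exactly this closure: without the Kähler identities the symmetrized sectional data would not detect all irreducible pieces of $\nabla^{\lambda}Rm$, so the crux is to verify, by a careful unitary-frame decomposition and a term-by-term application of the Bianchi identities, that the Kähler symmetries collapse enough components for the diagonal to determine the whole tensor.
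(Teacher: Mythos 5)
Your outline follows the paper's proof step for step: the same inner induction on $\lambda$, the same commutation step (your model-comparison justification for reordering derivatives is precisely the paper's Claim 2, including the bottom case $\lambda=2$, where the discrepancy is the universal $Rm\ast Rm$ expression evaluated on the model curvature and hence zero), and the same endgame of recovering the full tensor $\nabla^{\lambda}Rm$ from sectional-type data via the K\"ahler structure. The only structural difference is that you keep the slots $X,Y$ free and polarize in $e_0$, whereas the paper sets $u=v=Je_0$ and works with the single scalar $R^{(\lambda)}_{Je_0Je_0}$ expanded in the variables $z_j,\overline{z_j}$ of $e_0=\sum_j(z_j\beta_j+\overline{z_j}\overline{\beta_j})$.

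The one genuine gap is that you leave the decisive step --- that the diagonal data determines all of $\nabla^{\lambda}Rm$ --- as an expectation (``I expect to express an arbitrary component\ldots''), and your stated reason for the within-group symmetry is not quite right: $\nabla J=0$ preserves the type of the curvature slots and their internal symmetries, but it does \emph{not} by itself symmetrize the derivative indices with the curvature slots, which is what the reconstruction actually needs. The missing mechanism, which is what the paper's phrase ``by claim 2 and Bianchi identities'' encodes, is this: on a K\"ahler manifold the $(2,0)$-part of the curvature vanishes identically, so the second Bianchi identity yields $R_{i\overline{j}k\overline{l},m}=R_{m\overline{j}k\overline{l},i}$ together with its conjugate; combined with $R_{i\overline{j}k\overline{l}}=R_{k\overline{j}i\overline{l}}=R_{i\overline{l}k\overline{j}}$ and with the commutation of derivative slots at $p$ already established in your second step, this makes $\nabla^{\lambda}Rm$ at $p$ totally symmetric in \emph{all} of its holomorphic indices and, separately, in all of its antiholomorphic indices. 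Once this total symmetry is in hand, the expansion of $R^{(\lambda)}_{Je_0Je_0}$ is a polynomial in $z_j,\overline{z_j}$ in which monomials of different bidegree do not mix and each monomial $z^I\overline{z}^J$ carries a single component of $\nabla^{\lambda}Rm$ times a nonzero combinatorial constant; the identical vanishing of the polynomial then kills every component, with no irreducible-decomposition analysis needed. This also shows that your extra freedom in $X,Y$ is unnecessary: the pure diagonal $u=v=Je_0$ already suffices, which is why the paper uses only $R^{(\lambda)}_{Je_0Je_0}$. With this verification inserted, your argument closes and coincides with the paper's.
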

\begin{proof}
We use induction.
  If $\lambda = 0$, lemma 3 automatically holds since there is nothing to prove.
  Suppose lemma 3 holds for $k<\lambda$.
  For $k=\lambda$, we plug (4.24) in $R^{(\lambda)} _{uv}$.
\begin{claim}  { We can commute the covariant derivatives for $R^{(\lambda)} _{uv}.$}
 \end{claim}

\begin{proof}
To prove claim 2, we only need to consider the case $\lambda \geq 2$.
   By the induction hypothesis of lemma 3, the covariant derivatives of the curvature tensor vanish up to order $\lambda -1$ at $p$.
If $\lambda > 3$, claim 2 follows from the ricci identity.
Now suppose $\lambda = 2$.
 By ricci identity, the difference of commuting the covariant derivatives is a function of the curvature tensor.
Note that the curvature tensor at $p$ is the same as the complex space form. We complete the proof for $\lambda = 2$.
\end{proof}
We insert (4.24) in $R^{(\lambda)} _{Je_0Je_0}$. By claim 2 and Bianchi identities,
 $R^{(\lambda)} _{Je_0Je_0}$ becomes a polynomial with variables $z_j, \overline{z_j}$. The coefficients of the polynomial are exactly all the covariant derivatives of $Rm$ at $p$ with order $\lambda$. According to the assumption of lemma 3, $R^{(\lambda)} _{Je_0Je_0}$ is identically 0 for all $e_0$. Therefore, the coefficients of the polynomial are all 0.
This completes the induction of lemma 3.
\end{proof}

\begin{lemma} {Under the assumption of the induction in proposition 1, $A$ could be written
 as $\sum\limits_{i=1}^{m-2}g_{i,m}\Delta^{i}s$ where $s$ denotes the scalar curvature, $g_{i,m}$ are constants
depending only on $n, m, i$.}
\end{lemma}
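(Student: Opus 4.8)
The plan is to compute the spherical integral explicitly, recognize $A$ as a $U(n)$-invariant linear functional of a single high-order covariant derivative of the curvature tensor, and then collapse every such functional onto iterated Laplacians of the scalar curvature by means of the Bianchi identities.

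First I would reuse the device of Lemma 3. Writing $e_0=\sum_j(z_j\beta_j+\overline{z_j}\,\overline{\beta_j})$ in the parallel frame and recalling $J\beta_j=\sqrt{-1}\beta_j$, the integrand $R^{(2m-4)}_{11}=(\nabla^{2m-4}_{e_0\cdots e_0}Rm)(e_0,Je_0,e_0,Je_0)$ is a homogeneous polynomial of degree $2m$ in $(z,\overline z)$ whose coefficients are the components of $\nabla^{2m-4}Rm$ at $p$; by the Kähler symmetry of the curvature tensor only the balanced monomials, with as many $z$'s as $\overline z$'s, survive. Integrating over $UT_p(M)=\{|e_0|=1\}$ annihilates every monomial except the diagonal ones $\prod_j|z_j|^{2a_j}$, and since the measure $dA$ together with the insertions of $J$ is invariant under $U(n)$, the number $A$ is a $U(n)$-invariant linear functional of $\nabla^{2m-4}Rm$. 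Hence $A$ is a universal linear combination, with coefficients depending only on $n$ and $m$, of complete metric contractions of the single tensor $\nabla^{2m-4}Rm$.

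Next I would reduce every complete contraction to an iterated Laplacian of $s$. The guiding fact is the classical identity $\int_{UT_p(M)}R_{e_0Je_0e_0Je_0}\,dA=c_n\,s$, so that, heuristically, $A$ is $\Delta^{m-2}$ of the averaged holomorphic sectional curvature, i.e. a multiple of $\Delta^{m-2}s$. To make this precise I would use $\nabla g=0$ and $\nabla J=0$ to pull any contraction of the four curvature slots through the derivatives, turning the Ricci/scalar pairing of those slots into $\nabla^{2m-4}s$; the fully symmetric derivative indices (all contracted against $e_0$) may be reordered freely, and the diagonal directional derivatives average into iterated Laplacians. Any contraction in which a derivative index meets a curvature index is converted, using the pair symmetry $R_{abcd}=R_{cdab}$ and the second Bianchi identity $\nabla_aR_{bcde}+\nabla_bR_{cade}+\nabla_cR_{abde}=0$, into divergences, which the contracted Bianchi identities $\nabla^aR_{abcd}=\nabla_cR_{bd}-\nabla_dR_{bc}$ and $\nabla^aR_{ab}=\tfrac12\nabla_bs$ turn into derivatives of the scalar curvature. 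Iterating, the principal part of each contraction is a constant multiple of $\Delta^{m-2}s$.

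The delicate point, and the main obstacle, is the treatment of the correction terms. Bringing a derivative index created by Bianchi into position to form a Laplacian requires genuine commutators $[\nabla_a,\nabla_b]=Rm\ast(\,\cdot\,)$, which generate products $\nabla^{\mu}Rm\ast\nabla^{\nu}Rm$ with $\mu+\nu<2m-4$. Here the induction hypothesis is decisive: by Lemma 3 one has $Rm^{(\lambda)}=0$ at $p$ for $1\le\lambda\le m-3$, so every product containing a factor of order between $1$ and $m-3$ vanishes; the only surviving products carry a factor of $Rm(p)$, which equals the complex space form tensor built algebraically from $g$ and $J$ with the fixed constant $\tfrac{K}{n+1}$, times a contraction of a lower-order $\nabla^{\nu}Rm$ that, by the same reduction applied in lower order, is again a combination of iterated Laplacians $\Delta^{i}s$. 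Since $p$ minimizes the scalar curvature ($s\ge nK=s(p)$), we also have $\nabla s(p)=0$, which removes the correction terms linear in $\nabla s$. The remaining work — and the crux of the lemma — is to verify that the factors of $K$ produced by $Rm(p)$ cancel among the various contractions, so that $A=\sum_{i=1}^{m-2}g_{i,m}\Delta^{i}s$ with constants $g_{i,m}$ depending only on $n,m,i$; this cancellation is exactly where the Kähler condition and the precise value of $Rm(p)$ are indispensable.
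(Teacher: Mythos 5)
Your first stage coincides with the paper's: expand $e_0=\sum_j(z_j\beta_j+\overline{z_j}\overline{\beta_j})$ in the parallel frame, integrate over the unit sphere so that only the diagonal monomials $\prod_j|z_j|^{2a_j}$ survive (note it is the integration, not the K\"ahler symmetry, that kills unbalanced monomials), and conclude that $A$ is a unitarily invariant linear functional of $\nabla^{2m-4}Rm$. After that you genuinely diverge. The paper never invokes invariant theory for $U(n)$ and never performs a Bianchi reduction of complete contractions; instead it compares $A$ directly with $\Delta^{m-2}s$: both are invariant expressions of the same diagonal shape $\sum C_{I_1\ldots I_n}R_{I_1\ldots I_n}$, and one explicit rotation in the $(\beta_1,\beta_2)$-plane forces the binomial relations $C_p=C_d\binom{d}{p}$ (Claim 3), which pin down all coefficient ratios; hence the top-order part of $A$ is a universal multiple of $\Delta^{m-2}s$, with the remainder (order at most $2m-6$, again invariant, with Lemma 3 controlling the Ricci-identity corrections) handled recursively by the same argument. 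Your route instead needs the first fundamental theorem for $U(n)$-invariants to write $A$ as a combination of complete contractions of $\nabla^{2m-4}Rm$, and then the classical reduction via the contracted Bianchi identities; that reduction is sound (every complete contraction does collapse to a multiple of $\Delta^{m-2}s$ plus quadratic commutator terms $\nabla^{\mu}Rm\ast\nabla^{\nu}Rm$), and it buys a frame-free, more standard argument at the cost of an extra invariant-theory input that the paper's rotation trick makes unnecessary.

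There is, however, a gap where you stop, and it is partly misdiagnosed. No ``cancellation of the factors of $K$'' is needed for terms of the form $K^{j}\Delta^{i}s$: $K$ is a fixed constant, so such terms are absorbed into the coefficients of $\Delta^{i}s$ (and in the application the lemma is used only after proving $\Delta^{k}s=0$ at $p$, so $K$-dependence of the coefficients is harmless; the paper's own recursion passes through the same $Rm(p)\ast\nabla^{\nu}Rm$ terms). The genuine danger in your scheme is different: fully contracted products built from $Rm(p)$ alone produce \emph{additive constants} proportional to powers of $K$, which are not of the form $\sum_{i\ge1}g_{i,m}\Delta^{i}s$ and would destroy the conclusion $A=0$. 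You must rule these out, and your proposal does not. Two ways to do it: for $m\ge5$, the correction orders descend by two and any even order $2\le\lambda\le m-3$ is killed by Lemma 3, so the recursion never reaches order zero; in general, the identity you derive is universal among metrics satisfying the induction hypotheses, and the complex space form satisfies them with $A=0$ and $\Delta^{i}s=0$, which forces the constant term to vanish for every $K$. Without one of these observations your reduction does not yet yield the stated form of $A$.
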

\begin{proof}
Define $X = \frac{1}{2}(e_0 - \sqrt{-1}Je_0)$, then $A= \int R_{X\overline{X}X\overline{X},e_0e_0...e_0}$ where the number of $e_0$ is $2m-4$. Plugging (4.24) in it, after the integration, we find
\begin{equation}
  A=\sum\limits_{\alpha_1\alpha_2...\alpha_{2m}}(\int \alpha_1\alpha_2...\alpha_{2m}) R_{\alpha_1\alpha_2\alpha_3\alpha_4,\alpha_5....\alpha_{2m}}
\end{equation}
  where $\alpha_i$ is \{$z_j$\} or \{$\overline{z_k}$\} for $0\leq j, k\leq n-1$, $\alpha_1, \alpha_3\in \{z_j\}$, $\alpha_2, \alpha_4\in \{\overline{z_k}$\}. Under the subscript of $R$, $z_j$ stands for $\beta_j$, $\overline{z_k}$ stands for $\overline{\beta_k}$.

   From the expression of (4.25), we see that $z_i,\overline{z_i}$ must all go in pairs in the sequence $\alpha_1\alpha_2..\alpha_{2m}$, otherwise the integral $\int \alpha_1\alpha_2...\alpha_{2m}$ equals $0$. Switching the covariant derivatives in (4.25), using K\"{a}hler identities, we can rearrange (4.25) as
\begin{equation}
A= \sum\limits_{I_1, I_2,... I_n} C_{I_1I_2..I_n}R_{I_1I_2...I_n}+ B
\end{equation}
 where the symbol $I_j$ denotes $z_j\overline{z_j}..z_j\overline{z_j}$; subscripts after the fourth subscript of $R$ denote the covariant derivatives; $C_{I_1I_2..I_n}$ are the coefficients; $\sum\limits_j|I_j|= 2m$; $B$ is the combination of covariant derivatives of $Rm$ with lower order. From (4.23), we see that the coefficients $C_{I_1I_2..I_n}$ in $(4.26)$ are unitary invariants. For fixed $I_3, I_4,..I_n$, let $d=|I_1| + |I_2|$. Denote $C_{I_1I_2..I_n}$ by $C_p$ where $0 \leq |I_1|=p \leq d$. We want to find the relations of \{$C_p$\}.
Take a unitary transformation:

    $ \widetilde{\beta_i}=\beta_i$ for $i \neq 1,2$;
   $\beta_1=\cos\theta\widetilde{\beta_1}+\sin\theta\widetilde{\beta_2}$;
   $\beta_2=-\sin\theta\widetilde{\beta_1}+\cos\theta\widetilde{\beta_2}$.

   Insert the unitary transformation above in  (4.26), the new coefficient $\tilde{C_d}$ becomes $\sum\limits_{p=0}^{d}C_p\cos^{2p}\theta\sin^{2(d-p)}\theta$. Therefore we have:
\begin{equation}
\sum\limits_{p=0}^{d}C_p\cos^{2p}\theta\sin^{2(d-p)}\theta=C_d=C_d(\cos^2\theta+\sin^2\theta)^d.
\end{equation}
\begin{claim}{ $C_p = C_d\binom dp$}

\end{claim}

\begin{proof}

  Divide by $cos^{2d}{\theta}$ on both sides, (4.27) becomes
 \begin{equation}\nonumber
 \sum\limits_{p=0}^{d}C_p\tan^{2(d-p)}\theta=C_d=C_d(1+\tan^2\theta)^d.
 \end{equation}

 Since $\theta$ is arbitrary, claim 3 follows.
\end{proof}

   By claim 3, $\frac{C_p}{C_d}=\binom dp$. Since we can substitute any index $u, v$ for $1, 2$, the ratio of all coefficients
in (4.26) are determined. Note that to get the relations between $C_p$, we only use the condition that the form (4.23) is unitary invariant. Since $\Delta^{m-2}s$ is also unitary invariant with respect to the frame, we can write it in the form as (4.26). By the same argument, the ratio of all coefficients of $\Delta^{m-2}s$ are the same as (4.26).
    It follows that the term $\sum\limits_{I_1, I_2,... I_n} C_{I_1I_2..I_n}R_{I_1I_2...I_n}$ in (4.26) equals
$C(m,n)\Delta^{(m-2)}s$ modulo lower order covariant derivatives, where $C(m, n)$ is a constant depending only on $m, n$.

\bigskip

   Now we make an important observation. From the Ricci identity,
 $R_{i_1\overline{i_2}....i_p\alpha\beta i_{p+3}..i_{2m}}-R_{i_1\overline{i_2}...i_p\beta\alpha i_{p+3}..i_{2m}}$ is the sum of
   $(RmRm^{(p-4)})_{,i_{p+3}..i_{2m}}$. By lemma 3, $Rm^{(\lambda)}=0$ for $1\leq\lambda\leq m-3$. It follows
that $(RmRm_{,i_{5}...i{p}})_{,i_{p+3}..i_{2m}}$ can be expanded as a linear combination of the covariant derivatives
of curvature tensor. Therefore $A-C(m, n)\Delta^{(m-2)}s$ can be written as a linear combination of the covariant derivatives
of the curvature tensor with the highest order $2m-6$.  Furthermore it is unitary invariant since the curvature tensor is unitary
 invariant at $p$. By recursive arguments, we complete the proof of lemma 4.
\end{proof}

  From the induction in proposition 1, $Ric^{(l)}=0$ for $1\leq l\leq 2m-4$.
  Integrating with respect to the unit sphere in $T_pM$, by similar arguments as in the proof of lemma 4, we find that for $l$ even,
\begin{equation}
0=\int Ric_{e_0e_0,e_0e_0...e_0}=\sum\limits_{k=1}^{\frac{l}{2}}C_{l,k}\Delta^{k}s
\end{equation}
where the order of the covariant derivative above is $l$. It is straightforward to check that the highest order coefficient $C_{l,\frac{l}{2}}$ is not equal to 0. Then by a recursive argument, $\Delta^{k}s=0$ at $p$ for $1\leq k\leq m-2$. Combine this with lemma 4, it follows that $A=0$. Similarly all linear terms in (4.10) vanish. Therefore, under the induction hypothesis in proposition 1, in order that $c_{2m}$ in (4.10) achieves the maximum, $Ric^{(2m-2)}=0$ and $R^{(\lambda)}_{uv}=0$ for $1\leq\lambda\leq m-2$. This is exactly the case of the complex space form. Therefore we complete the induction in proposition 1. As a byproduct, we proved conclusion 2 in proposition 1.
The proof of proposition 1 is complete. \qed

\section{\bf{The proof of theorem 4}}

    Under the assumption of proposition 2, using the same argument as in the last section, we find that $W^{(2m+1)}$ is a linear combination of $\int R^{(m+i)}_{11}$($1 \leq i \leq m-3$)(the terms with order greater than $2m-3$ could be absorbed in $Ric^{(m+i)}$ to vanish). Similar as the proof of lemma 4, $W^{(2m+1)}$ is equal to $0$. This completes the proof of proposition 2.

Consider two cases below:

1. All coefficients of the power series of $W$ are equal to that of the complex space form. Follow proposition 1, all covariant derivatives of the curvature tensor at $p$ are the same as the complex space form. Since the metric is real analytic, we conclude that near $p$, the manifold is isometric to the complex space form.

2. There is a $i_0 \geq 1$ such that for all $i < i_0$, the coefficients of the power series of $W$ are equal to that of the complex space form, but the $i_0$th coefficient is less than that of the complex space form.
Checking the power series of $\frac{W'}{W}$ at $p$, we find that for sufficiently small $r$, $\frac{W'}{W}$ is less than that of the complex space form. Follow the definition of $W$, for small $r$,
 $$\frac{\int_{\partial B_p(r)}\Delta r}{A(\partial B_p(r))} = \frac{\int (\sqrt{det<J_u,J_v>})'}{\int \sqrt{det<J_u,J_v>}} < \Delta_{N_K}r(r).$$

 The proof of theorem 4 is complete. \qed

\section{\bf{An example}}

In this section we give an example showing that the analogous Laplacian comparison theorem is not true on K\"{a}hler manifolds when the Ricci curvature is bounded from below by a nonzero constant. The example is in dimension 2. For higher dimensions, the construction is similar.

 Identify $\mathbb{R}^4$ with $\mathbb{C}^2$ in the usual way.
The corresponding almost complex structure $J$ is given by $J\frac{\partial}{\partial x_1} = \frac{\partial}{\partial x_2}, J\frac{\partial}{\partial x_2} = -\frac{\partial}{\partial x_1}, J\frac{\partial}{\partial x_3} = \frac{\partial}{\partial x_4}, J\frac{\partial}{\partial x_4} = -\frac{\partial}{\partial x_3}$.

Given a small ball near the origin of $\mathbb{C}^2$, define the function $f$ to be
\begin{displaymath}
\begin{aligned}
f&= |z_1|^2+|z_2|^2+a|z_1|^4+8a|z_1|^2|z_2|^2+a|z_2|^4+\frac{8}{3}a^2|z_1|^6+\\&28
a^2|z_1|^4|z_2|^2+28a^2|z_1|^2|z_2|^4+\frac{8}{3}a^2|z_2|^6+p(|z_1|,|z_2|)
\end{aligned}
\end{displaymath}
where $a$ is a nonzero constant and $p$ is a homogeneous polynomial of degree 8 which will be determined later.

We define $$\omega = \frac{\sqrt{-1}}{2}\partial\overline{\partial}f = \frac{\sqrt{-1}}{2}\sum\limits_{i,j} g_{i\overline{j}}dz_i \wedge d\overline{z_j}.$$
It is straightforward to check that $\omega$ defines a K\"{a}hler metric $g$ if the ball is sufficiently small (note that the metric is not complete).

Direct computation gives
\begin{displaymath}
\begin{aligned}
g_{1\overline{1}}&=1+4a|z_1|^2+8a|z_2|^2+24a^2|z_1|^4+112a^2|z_1|^2|z_2|^2+28a^2|z_2|^4\\&+O((|z_1|+|z_2|)^6));
\end{aligned}
\end{displaymath}

\begin{displaymath}
\begin{aligned}
g_{2\overline{2}}&=1+4a|z_2|^2+8a|z_1|^2+24a^2|z_2|^4+112a^2|z_1|^2|z_2|^2+28a^2|z_1|^4\\&+O((|z_1|+|z_2|)^6));
\end{aligned}
\end{displaymath}
\begin{displaymath}
\begin{aligned}
g_{1\overline{2}}=8a\overline{z_1}z_2+56a^2z_1\overline{z_1}^2z_2+56a^2\overline{z_1}z_2^2\overline{z_2}+O((|z_1|+|z_2|)^6)).
\end{aligned}
\end{displaymath}
Therefore
\begin{displaymath}
\begin{aligned}
det(g_{i\overline{j}})&=g_{1\overline{1}}g_{2\overline{2}}-|g_{1\overline{2}}|^2\\&=(1+4a|z_1|^2+8a|z_2|^2+24a^2|z_1|^4+112a^2|z_1|^2|z_2|^2+28a^2|z_2|^4)
\\&(1+4a|z_2|^2+8a|z_1|^2+24a^2|z_2|^4+112a^2|z_1|^2|z_2|^2+28a^2|z_1|^4)\\&-|8a\overline{z_1}z_2+56a^2z_1\overline{z_1}^2z_2+56a^2\overline{z_1}z_2^2\overline{z_2}|^2
+O((|z_1|+|z_2|)^6)\\&=1+12a(|z_1|^2+|z_2|^2)+84a^2(|z_1|^4+|z_2|^4)+240a^2|z_1|^2|z_2|^2\\&+O((|z_1|+|z_2|)^6)).
\end{aligned}
\end{displaymath}
Using $log(1+x)=x-\frac{1}{2}x^2+O(x^3)$, we have
\begin{displaymath}
\begin{aligned}
Ric + 12ag &= \partial\overline{\partial}(-log (detg_{i\overline{j}})+ 12af)
=\partial\overline{\partial}(O((|z_1|+|z_2|)^6)).
\end{aligned}
\end{displaymath}
Therefore $Ric + 12ag$ vanishes up to order 3 at the origin.
Moreover, if we choose the function $p$ to be $-\lambda(|z_1|^8+|z_2|^8+8(|z_1|^6|z_2|^2+|z_1|^2|z_2|^6))$, after a direct computation,
 \begin{displaymath}
\begin{aligned}
 Ric + 12ag =  \partial\overline{\partial}(24\lambda(|z_1|^2+|z_2|^2)^3+O((|z_1|+|z_2|)^6)
 \end{aligned}
\end{displaymath}
where the term $O((|z_1|+|z_2|)^6)$ does not depend on $\lambda$.
If $\lambda$ is sufficiently large,
$Ric + 12ag \geq 0$ near the origin. Set $K = -12a$. Thus, near the origin, $Ric \geq K$.
By direct computation, at the origin, $R_{1212}=R_{1313}=R_{1414}=4a$; $R_{1u1v}=0$ if $u \neq v$.
Combining this with the fact that the second derivatives of the Ricci tensor vanish at the origin,
after a slight computation, we find that the fourth order term of (3.3) is greater than that of the complex
space form if $e_0 = \frac{\partial}{\partial x_1}$. So when $r$ is very small, along the geodesic with initial
direction $\frac{\partial}{\partial x_1}$ at the origin, $\sqrt{det<J_u,J_v>}$ is greater than that of the complex
space form. Since $\Delta r = \frac{\partial \log\sqrt{det<J_u,J_v>}}{\partial r}$, it follows that the pointwise
Laplacian comparing with the complex space forms is not true for K\"ahler manifolds.


\begin{thebibliography}{9}

\bibitem{[1]}  Bishop. R.L, Crittenden. R.J.:\emph{ Geometry of Manifolds}, Pure and Applied Math., Vol.
XV. New York-London: Academic Press 1964.

\bibitem{[2]}  J. Cheeger, D.Ebin.: \emph{Comparison Theorems in Riemannian Geometry}, 2000 Mathematics Subject Classification.Primary 53C20;
Secondary 58E10.

\bibitem{[3]}  Gromov. M, Lafontaine, J., Pansu, P.:\emph{ Structure metrique pour les varietes Riamanniennes},
Nathen: Cedic/Fernand 1981.

\bibitem{[4]}  P. Li:\emph{ Lecture notes on geometric analysis}, Lecture Notes Series, 6, Research Institute of Mathematics and Global Analysis Research Center, Seoul National University, Seoul, 1993.

\bibitem{[5]}  P. Li and J. Wang: \emph{Comparison theorem for K\"{a}hler manifolds and positivity of spectrum}, J.
Diff. Geom. 69 (2005), 43-74.

\bibitem{[6]}  R. Schoen, S. T. Yau:\emph{ Lecture notes on differential geometry}, Conference Proceedings and Lecture Notes in Geometry and Topology, Boston 1994.


\end{thebibliography}
  \end{document}